\newtheorem{theo}[subsection]{Theorem}
\newtheorem{lem}[subsection]{Lemma}
\newtheorem{prop}[subsection]{Proposition}
\newtheorem{corr}[subsection]{Corollary}
\newtheorem{rem}[subsection]{Remark}
\newcommand{\cC}{{\cal C}}
\newcommand{\cZ}{{\cal Z}}
\newcommand{\caD}{{\cal D}}
\newcommand{\cM}{{\cal M}}
\newcommand{\cN}{{\cal N}}
\newcommand{\Rep}{{\cal R}{\it ep}}
\newcommand{\Vect}{{\cal V}{\it ect}}
\newcommand{\End}{{\cal E}{\it nd}}
\newcommand{\sve}{{\scriptscriptstyle {\vee}}}
\begin{document}
\author{Alexei Davydov}
\title{Centre of an algebra}
\maketitle
\date{}
\begin{center}
Department of Mathematics, Division of Information and Communication Sciences, Macquarie University, Sydney, NSW 2109, Australia
\end{center}
\begin{center}
davydov@science.mq.edu.au
\end{center}
\begin{abstract}
Motivated by algebraic structures appearing in Rational Conformal Field Theory we study a construction associating to an algebra in a monoidal category a commutative algebra ({\em full centre}) in the monoidal centre of the monoidal category. We establish Morita invariance of this construction by extending it to module categories. 

As an example we treat the case of group-theoretical categories.
\end{abstract}
\tableofcontents
\section{Introduction}
The notion of vector space with an associative product, i.e. an associative algebra, plays at important role in many parts of mathematics. 
Centre of algebra is an important invariant. For example, it tells when algebras can be Morita equivalent: if two algebras are Morita equivalent their centers are isomorphic (this follows from the fact that the centre of an algebra can be derived from the category of its modules). The notion of an algebra (and its module) can be transported to the much more general environment of monoidal categories. There it also plays an important role, capturing very diverse constructions (e.g. the notion of a monad is just a reincarnation of algebra). Although very straightforward with algebras and modules, the transportation of notions to the world of monoidal categories becomes less trivial with the centre. Without assuming commutativity of the tensor product it becomes hard to even define what is for an algebra to be commutative. Even with a commutativity assumption the situation is quite interesting, e.g. in a braided monoidal category there are two notions (left and right) of centre of an algebra. 

Our motivation for studying (and even defining) centers of algebras comes from Rational Conformal Field Theories (RCFTs). It was known for quite a while that a lot of information about the chiral half of a RCFT is contained in a certain monoidal category. Axiomatised in \cite{ms,tu} under the name of modular category, they were studied extensively by mathematicians and theoretical physicists. 
Recently it was realised that certain algebras (more precisely their categories of modules) in the chiral modular category of an RCFT correspond to a consistent set of its boundary conditions, while certain commutative algebras in the monoidal centre of the chiral modular category describe the RCFT in the bulk, i.e. the full RCFT (see \cite{kr} and references therein). The transition from algebras in a modular category to commutative algebras in its monoidal centre was studied in \cite{ffrs,kr} under the name of full centre. Although working very well (e.g. being Morita invariant) the construction uses heavily specific properties of algebras and modular categories.

In this paper we present a construction (also named full centre), which associates to an algebra in a monoidal category a commutative algebra ({\em full centre}) in the monoidal centre of the monoidal category (section \ref{fc}). Based on a universal property, the construction is quite general. We prove that full centre is Morita invariant by extending the definition from algebras to module categories over a monoidal category (section \ref{fc}). We also show that, when applied to algebras in a modular category, our construction give the right answer (section \ref{fc}). We conclude by looking at a case where the category is not modular, i.e. we describe full centers of separable algebras in categories of group-graded vector spaces and categories of representations of a group (section \ref{gth}).

For the definitions of monoidal categories and monoidal functors see \cite{mc}. 
Throughout the paper we assume that all monoidal categories are strict. This assumption is in fact inessential, it is made for simplicity and can be lifted without a problem. The term ``monoidal functor" means strong (otherwise we use ``lax monoidal functor"). We also assume that all monoidal functors are strong. We will often omit (especially in big diagrams) the tensor product sign, e.g. $XY$ will mean $X\otimes Y$, $X^2$ will mean $X^{\otimes 2}$ and $fg$ will mean $f\otimes g$ for objects $X,Y$ and morphisms $f,g$ of a monoidal category.

\section*{Acknowledgment}

The work on the paper began during author's visit to Max Planck Institut F\"ur Mathematik (Bonn) in summer 2008, was continued in 2009 in S\~ao Paulo, where the author was accommodated by the Institute of Mathematics and Statistics of the University of S\~ao Paulo and finished, while the author was visiting Macquarie University. The author would like to thank these institutions for hospitality and excellent working conditions. The author would like to thank ARC, Max Planck Gesellschaft, FAPESP (grant no. 2008/10526-1), and Safety Net Grant of Macquarie University, whose  financial support made the visits to Bonn, S\~ao Paulo and Sydney possible. During the work on the paper the author was consulting with J. Fuchs, M. M\"uger, C. Schweigert to whom he expresses his gratitude. Special thanks are to L. Kong, I. Runkel, R. Street and the anonymous referee. 

\section{Algebras in monoidal categories}

An (associative, unital) {\em algebra} in a (strict) monoidal category $\cC$ is a triple $(A,\mu,\iota)$ consisting of an object
$A\in\cC$ together with a {\em multiplication} $\mu:A\otimes A\to A$ and a {\em unit} map $\iota:I\to A$, satisfying
{\em associativity} and {\em unit} axioms:
$$
\begin{xy}
(-18,0)*+!R{\xybox{ \xymatrix{  & A^{\otimes 2}
\ar[dr]^{\mu} & \\ A^{\otimes 3} \ar[ru]^{A\otimes\mu}  \ar[rd]_{\mu\otimes A} & & A
\\  & A^{\otimes 2} \ar[ru]_{\mu} & }}}, (0,0)*+{\xybox{ \xymatrix{ A \ar[r]^{\iota\otimes A} \ar[dr]_1& A^{\otimes 2} \ar[d]^\mu
\\ & A }}}, (20,0)*+!L {\xybox{\xymatrix{ A \ar[d]_{A\otimes\iota} \ar[dr]^1& \\ A^{\otimes 2} \ar[r]_\mu
& A }}}
\end{xy}
$$
Where it will not cause confusion we will be talking about an algebra $A$, suppressing its multiplication and unit
maps.
\newline
A morphism of algebras $f:A\to B$ is a (unital) {\em homomorphism} if the following diagrams commute
$$
\begin{xy}
(-18,0)*+!R{\xybox{ \xymatrix{ A^{\otimes 2} \ar[r]^{ff} \ar[d]_\mu & B^{\otimes 2} \ar[d]^\mu \\ A \ar[r]_f & B }}}, (0,0)*+{\xybox{ \xymatrix{ I \ar[r]^{\iota} \ar[dr]_\iota & A \ar[d]^f
\\ & B }}}
\end{xy}
$$

An algebra $C$ in a braided monoidal category $\caD$ is {\em commutative} if the diagram 
$$\xymatrix{C\otimes C \ar[rr]^{c_{C,C}} \ar[rd]_\mu && C\otimes C \ar[ld]^\mu \\ & C  }$$

The unit object $I$ of a monoidal category has a canonical structure of an algebra. Here we another example of algebras, which will be used extensively. Recall (say from \cite{mc}) that an object $T$ of a category $\cC$ is {\em terminal} if for any object $X\in\cC$ there is exactly one morphism $X\to T$.
\begin{lem}\label{ta}
The terminal object of a monoidal category is an algebra. 
\newline
The terminal object of a braided monoidal category is a commutative algebra.
\end{lem}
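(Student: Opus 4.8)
The plan is to exploit the defining property of a terminal object $T$: for every object $X$ the hom-set $\cC(X,T)$ has exactly one element. Consequently any two parallel morphisms with common target $T$ are automatically equal, so every diagram whose terminal vertex is $T$ commutes, and the algebra axioms become vacuous once the structure maps are named.

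First I would set $\mu\colon T\otimes T\to T$ and $\iota\colon I\to T$ to be the unique morphisms supplied by terminality; no choice is involved. For the associativity axiom, $\mu\circ(T\otimes\mu)$ and $\mu\circ(\mu\otimes T)$ are both morphisms $T^{\otimes 3}\to T$, hence equal. For the unit axioms, strictness gives $I\otimes T=T=T\otimes I$, so $\mu\circ(\iota\otimes T)$, $\mu\circ(T\otimes\iota)$ and $1_T$ all lie in $\cC(T,T)$ and therefore coincide. This proves the first assertion.

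For the second assertion, in a braided monoidal category $\caD$ with braiding $c$ the morphisms $\mu\circ c_{T,T}$ and $\mu$ both belong to $\caD(T\otimes T,T)$, a one-element set, so the commutativity triangle commutes; hence $(T,\mu,\iota)$ is a commutative algebra.

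The hard part will be essentially nonexistent: the content of the lemma is exactly the observation that a terminal object absorbs every coherence condition. The only things to watch are that the source objects appearing in the unit diagrams are literally $T$, which is guaranteed by the standing strictness assumption (otherwise one inserts the unit constraints and argues identically), and that $\mu$ and $\iota$ are genuinely forced, so that the construction is canonical.
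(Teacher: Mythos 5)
Your proposal is correct and is essentially the paper's own argument: take $\mu$ and $\iota$ to be the unique morphisms into $T$ and note that all the axioms (associativity, unit, and commutativity in the braided case) hold automatically because any two parallel morphisms into a terminal object coincide. Nothing further is needed.
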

\begin{proof}
Let $T$ be the terminal object. The unique morphisms $I\to T$, $T\otimes T\to T$ turn it into an algebra. Indeed, the axioms follow from the uniqueness  of morphisms into $T$. 
\end{proof}
Let $F:\cC\to\caD$ be a functor and $A\in\caD$ be an object. {\em Comma category} $F$$\downarrow$$A$ (see \cite{mc}) is the category of pairs $(X,x)$, where $X$ is an object of $\cC$ and $x:F(X)\to A$ is a morphism in $\caD$. Morphisms of pairs are morphisms of the first components, compatible with the second components. Note that if the functor $F$ is monoidal (assuming that $\cC,\caD$ are monoidal) and $A$ is an algebra then the comma-category $F$$\downarrow$$A$ is monoidal with the tensor product $(X,x)\otimes (Y,y) = (X\otimes Y,\overline{x\otimes y})$, where $\overline{x\otimes y}$ is the composition 
$$\xymatrix{F(X\otimes Y) \ar[r]^(.45){F_{X,Y}} & F(X)\otimes F(Y) \ar[r]^(.6){xy} & A\otimes A \ar[r]^(.6)\mu & A  }.$$ 
The unit object is $(I,i)$, where $i$ is the composition
$$\xymatrix{F(I) \ar[r] & I \ar[r]^\iota & A}.$$
The forgetful functor $F$$\downarrow$$A\to\cC$ and the evaluation functor $F$$\downarrow$$A\to\caD$ are monoidal. The following statement will also be used throughout.
\begin{lem}\label{cc}
Let $F:\cC\to\caD$ be a monoidal functor and $A$ be an algebra in $\caD$. Let $(B,b)$ be an algebra in the comma category $F$$\downarrow$$A$. Then $b:F(B)\to A$ is a homomorphism of algebras in $\caD$.
\end{lem}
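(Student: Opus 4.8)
The plan is to unwind the hypothesis and the conclusion into explicit equations between morphisms of $\caD$ and to observe that they coincide. First I would record what an algebra $(B,b)$ in $F$$\downarrow$$A$ amounts to: a multiplication $\mu\colon (B,b)\otimes(B,b)\to(B,b)$ and a unit $\iota\colon (I,i)\to(B,b)$, which by the definition of the comma category are morphisms $\mu_B\colon B\otimes B\to B$ and $\iota_B\colon I\to B$ in $\cC$ compatible with the second components. Since the forgetful functor $F$$\downarrow$$A\to\cC$ is monoidal, $(B,\mu_B,\iota_B)$ is an algebra in $\cC$, and since $F$ is monoidal, $F(B)$ becomes an algebra in $\caD$ with multiplication $F(\mu_B)\circ F_{B,B}^{-1}\colon F(B)\otimes F(B)\to F(B)$ and unit $F(\iota_B)\circ F_0^{-1}\colon I\to F(B)$; here $F_{B,B}\colon F(B\otimes B)\to F(B)\otimes F(B)$ and $F_0\colon F(I)\to I$ are the structure constraints of $F$ (invertible because $F$ is strong), and this is the algebra structure on $F(B)$ meant in the statement.

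Next I would read off the two compatibility conditions carried by $b$. By the description of the tensor product in $F$$\downarrow$$A$ recalled before the lemma, the fact that $\mu$ is a morphism $(B\otimes B,\overline{b\otimes b})\to(B,b)$ is exactly the identity $b\circ F(\mu_B)=\mu_A\circ(b\otimes b)\circ F_{B,B}$ in $\caD$; composing on the right with $F_{B,B}^{-1}$ rewrites this as $b\circ\bigl(F(\mu_B)\circ F_{B,B}^{-1}\bigr)=\mu_A\circ(b\otimes b)$, which is precisely the multiplicativity square for $b$. Similarly, the fact that $\iota$ is a morphism $(I,i)\to(B,b)$ reads $b\circ F(\iota_B)=i=\iota_A\circ F_0$, and composing on the right with $F_0^{-1}$ gives $b\circ\bigl(F(\iota_B)\circ F_0^{-1}\bigr)=\iota_A$, the unit compatibility for $b$. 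Together these are the two diagrams defining a homomorphism of algebras in $\caD$, so $b$ is one. (Note that the associativity and unit axioms of $(B,b)$ itself play no role; only the fact that $\mu$ and $\iota$ live in the comma category is used.)

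I do not expect a genuine obstacle here — the statement is a bookkeeping exercise — but two points deserve care. The first is to obtain the algebra structure on $F(B)$ correctly by transporting along the monoidal functors attached to $F$$\downarrow$$A$ rather than positing it ad hoc (one may equally note that the monoidal evaluation functor $F$$\downarrow$$A\to\caD$ sends the algebra $(B,b)$ to this same algebra $F(B)$). The second is to keep straight the directions of the constraints $F_{B,B}$ and $F_0$ and to use that they are invertible — this is where strongness of $F$ enters — so that they can legitimately be cancelled against the copies of $F_{B,B}$ and $F_0$ hidden inside $\overline{b\otimes b}$ and inside $i$. With these pinned down, both verifications are one line each.
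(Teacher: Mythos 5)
Your proof is correct and follows essentially the same route as the paper: unwind the definition of the tensor product and of morphisms in the comma category $F$$\downarrow$$A$, and cancel the invertible monoidal constraints of $F$ to recover the multiplicativity and unit squares for $b$. The paper records only the multiplicativity diagram and leaves the unit check implicit, which you spell out, but there is no substantive difference in method.
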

\begin{proof}
Follows from the definition of morphisms and tensor product in comma category:
$$\xymatrix{F(B)\otimes F(B) \ar[rr]^{bb}  \ar@/_15pt/[ddr]_{\mu_{F(B)}} && A\otimes A \ar[d]^{\mu_A} \\ & F(B\otimes B) \ar[ul]^{F_{B,B}} \ar[r]^{\overline{bb}} \ar[d]_{F(\mu_B)}  & A \\ & F(B)\ar[ur]_b}$$
\end{proof}

\section{Monoidal centre of a category}
Here we recall (from \cite{js}) the construction and basic properties of monoidal centre of a monoidal category.

The {\em monoidal centre} $\cZ(\cC)$ of a monoidal category $\cC$ is the category of pairs
$(Z,z)$, where $Z\in\cC$ and $z$ stands for a natural collection of isomorphisms $z_X:Z\otimes X\to X\otimes Z$ ({\em half braidings}),
such that $z_I=1$ and the diagram
$$\xymatrix{ & Z\otimes(X\otimes Y) \ar[rr]^{z_{X\otimes Y}} \ar[ld]_{a_{Z,X,Y}} && (X\otimes Y)\otimes Z & \\ (Z\otimes
X)\otimes Y \ar[rd]_{z_X\otimes Y} &&&& X\otimes(Y\otimes Z) \ar[ul]^{a_{X,Y,Z}} \\ & (X\otimes Z)\otimes Y
\ar[rr]_{a_{X,Z,Y}^{-1}} && X\otimes(Z\otimes Y) \ar[ru]_{X\otimes z_Y} }$$ commutes for all $X,Y\in \cC$. Morphisms
in $\cZ(\cC)$ are morphisms of first components (in $\cC$), compatible, in a natural way, with second components. The
category $\cZ(\cC)$ is monoidal with respect to the tensor product
$$(Z,z)\otimes(W,w) = (Z\otimes W,z|w),$$
where $z|w$ is defined by
$$\xymatrix{ & (Z\otimes W)\otimes X \ar[rr]^{(z|w)_X} && X\otimes (Z\otimes W)  \ar[rd]^{a_{X,Z,W}} & \\
Z\otimes(W\otimes X) \ar[ru]^{a_{Z,W,X}}  \ar[rd]_{Z\otimes w_X} &&&& (X\otimes Z)\otimes W  \\ & Z\otimes (X\otimes W)
\ar[rr]_{a_{Z,X,W}} && (Z\otimes X)\otimes W \ar[ru]_{z_X\otimes W} }$$ Moreover $\cZ(\cC)$ is a braided monoidal
category with the braiding
$$c_{(X,x),(Y,y)} = x_Y.$$
\newline
The forgetful functor $$F:\cZ(\cC)\to\cC,\quad (Z,z)\mapsto Z$$ is clearly faithful and monoidal (with the monoidal structure being the identity).
In what follows we, when speaking about objects of the monoidal centre, will often omit the half braiding, e.g. instead of $(Z,z)$ we will have $Z$ (suppressing the half braiding $z$). 

\section{Full centre of an algebra}\label{fc}

Let $A$ be an algebra in a monoidal category $\cC$. The {\em full centre} $Z(A)$ of $A$ is an object of the monoidal centre $\cZ(\cC)$ together with a morphism $Z(A)\to A$ in $\cC$, terminal among pairs $(Z,\zeta)$, where $Z\in\cZ(\cC)$ and $\zeta:Z\to A$ is a morphism in $\cC$ such that the following diagram commutes:
\begin{equation}\label{cp}
\xymatrix{ Z\otimes A \ar[r]^{\zeta A} \ar[dd]_{z_A} & A\otimes A \ar[rd]^\mu\\ & & A\\ A\otimes Z \ar[r]_{A\zeta} & A\otimes A \ar[ur]_\mu }
\end{equation}
Here $z_A$ is the half-braiding of $Z$ as an object of $\cZ(\cC)$.  The terminality condition means that for any such pair $(Z,\zeta)$ there is a unique morphism $Z\to Z(A)$ in the monoidal centre $\cZ(\cC)$, which makes the diagram
$$\xymatrix{ Z \ar[rr] \ar[rd]_\zeta && Z(A) \ar[ld] \\ & A & }$$ commute. 

\begin{prop}\label{algz}
The full centre $Z(A)$ has a unique structure of an algebra in $\cZ(\cC)$ such that the morphism $Z(A)\to A$ is a homomorphism of algebras in $\cC$. Moreover $Z(A)$ is a commutative algebra in $\cZ(\cC)$.
\end{prop}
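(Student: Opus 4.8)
The plan is to exhibit the pair $(Z(A),\zeta)$ as the terminal object of a \emph{braided} monoidal category and then read off the whole statement from Lemmas \ref{ta} and \ref{cc}. Let $F\colon\cZ(\cC)\to\cC$ be the forgetful functor; it is monoidal, so since $A$ is an algebra the comma category $F\downarrow A$ is monoidal, with tensor product $(Z,\zeta)\otimes(W,\omega)=(Z\otimes W,\overline{\zeta\otimes\omega})$, $\overline{\zeta\otimes\omega}=\mu\circ(\zeta\otimes\omega)$ (using that $F$ is strict), and unit object $(I,\iota)$. Let $\caD\subseteq F\downarrow A$ be the full subcategory consisting of those pairs $(Z,\zeta)$ for which diagram (\ref{cp}) commutes; by the definition of the full centre, $Z(A)$ together with its structure morphism is exactly the terminal object of $\caD$.

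The key step is to show that $\caD$ is a braided monoidal subcategory of $F\downarrow A$. The unit object $(I,\iota)$ lies in $\caD$: since the half braiding of $I$ is the identity, both legs of (\ref{cp}) reduce to $1_A$ by the unit axiom of $A$. Closure of $\caD$ under $\otimes$ is a diagram chase: for $(Z,\zeta),(W,\omega)\in\caD$ one expands $(z|w)_A=(z_A\otimes W)\circ(Z\otimes w_A)$, uses associativity of $\mu$ to rewrite both legs of (\ref{cp}) for the pair $(Z\otimes W,\overline{\zeta\otimes\omega})$, and then applies (\ref{cp}) for $(W,\omega)$ and for $(Z,\zeta)$ in turn, together with naturality of the half braidings, to identify the two legs. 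Being full, closed under $\otimes$ and containing the unit, $\caD$ inherits the strict monoidal structure, and the forgetful functors $\caD\to\cZ(\cC)$ and $\caD\to\cC$ become monoidal. Finally, the braiding $c_{(Z,z),(W,w)}=z_W$ of $\cZ(\cC)$ restricts to $\caD$: one needs $\mu\circ(\omega\otimes\zeta)\circ z_W=\mu\circ(\zeta\otimes\omega)$, and this follows by precomposing (\ref{cp}) for $(Z,\zeta)$ with $1_Z\otimes\omega$ and invoking naturality of $z$ in its subscript. Thus $\caD$ is braided monoidal.

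Now everything follows formally. By Lemma \ref{ta} the terminal object of the braided monoidal category $\caD$ is a commutative algebra in $\caD$; applying the forgetful functor $\caD\to\cZ(\cC)$, which is monoidal and respects the braidings, shows that $Z(A)$ is a commutative algebra in $\cZ(\cC)$. Since $\caD\subseteq F\downarrow A$, this is in particular an algebra $(Z(A),\zeta)$ in the comma category $F\downarrow A$, so Lemma \ref{cc} (applied to the forgetful functor $F$) says that $\zeta\colon F(Z(A))=Z(A)\to A$ is a homomorphism of algebras in $\cC$. For uniqueness, suppose $(\mu',\iota')$ is any algebra structure on $Z(A)$ in $\cZ(\cC)$ for which $\zeta$ is a homomorphism of algebras in $\cC$. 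Then $\zeta\circ\mu'=\overline{\zeta\otimes\zeta}$ and $\zeta\circ\iota'=\iota$, which is precisely the condition for $\mu'$ and $\iota'$ to be morphisms of $\caD$ (their sources $(Z(A)\otimes Z(A),\overline{\zeta\otimes\zeta})$ and $(I,\iota)$ lie in $\caD$ by the previous paragraph); hence $(\mu',\iota')$ defines an algebra structure on the terminal object of $\caD$, and all morphisms into a terminal object are unique, so $(\mu',\iota')$ coincides with the structure constructed above.

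The only genuinely computational point is the closure of $\caD$ under tensor product, and I expect this to be the main obstacle: it is the one place where condition (\ref{cp}) for \emph{both} tensor factors must be combined with the explicit composition law $(z|w)_A=(z_A\otimes W)\circ(Z\otimes w_A)$ for half braidings and with associativity of $\mu$. Everything else is either a direct invocation of Lemmas \ref{ta} and \ref{cc} or a formal consequence of terminality.
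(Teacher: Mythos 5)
Your argument is correct, but it follows the route the paper only sketches in Remark \ref{subc}, not the proof actually given for Proposition \ref{algz}. The paper's proof is direct: it applies terminality to the pairs $(I,\iota)$ and $(Z(A)^{\otimes 2},\mu\circ(\zeta\otimes\zeta))$ to construct the unit and multiplication, and then verifies associativity, the unit law and commutativity one at a time by explicit diagrams, each time using uniqueness of morphisms into $Z(A)$. You instead exhibit the category of pairs $(Z,\zeta)$ satisfying (\ref{cp}) as a full monoidal subcategory of the comma category $F\downarrow A$ (the paper's $\cZ(A)$ of Remark \ref{subc}) and read the statement off Lemmas \ref{ta} and \ref{cc}; the computational core, closure of this subcategory under $\otimes$, is exactly the paper's big hexagonal diagram, and your sketch of it (expand $(z|w)_A=(z_A\otimes W)\circ(Z\otimes w_A)$, alternate associativity of $\mu$ with condition (\ref{cp}) for each factor) is sound. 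The genuine addition on your side is the observation that the braiding $c_{(Z,z),(W,w)}=z_W$ of $\cZ(\cC)$ restricts to this subcategory — your verification, precomposing (\ref{cp}) for $(Z,\zeta)$ with $1_Z\otimes\omega$ and using naturality of $z$, is exactly right — which is precisely what is needed to extract commutativity from the second half of Lemma \ref{ta}; the paper's remark is silent on this point (it only claims the ``major part'' of the proposition follows), so its direct diagram for $\mu\circ z_{Z(A)}=\mu$ after composing with $\zeta$ is what your braiding check replaces. You also spell out the uniqueness clause, which the paper leaves to terminality implicitly. In short: the paper's proof trades structure for hands-on diagram checks; yours front-loads the structural work (braided monoidal subcategory, monoidal forgetful functors) and then gets associativity, unit, commutativity and the homomorphism property formally, at the cost of the extra checks that the unit, the tensor product and the braiding all stay inside the subcategory.
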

\begin{proof}
The pair $(I,\iota)$, where $\iota:I\to A$ is the unit map, satisfies to the condition (\ref{cp}) because the diagram 
$$\xymatrix{ I\otimes A \ar[rr]^{\iota A} \ar[dd] \ar[dr] && A\otimes A \ar[rd]^\mu\\ & A\ar[rr]^1&& A\\ A\otimes I \ar[rr]_{A\iota} \ar[ru] && A\otimes A \ar[ur]_\mu }$$ commutes. Hence there is a unique morphism $I\to Z(A)$ such that the diagram 
$$\xymatrix{ I \ar[rr] \ar[rd]_\iota && Z(A) \ar[ld]^\zeta \\ & A & }$$ commutes. 

Similarly, the morphism 
$$\xymatrix{Z(A)^{\otimes 2} \ar[r]^{\zeta\zeta} & A^{\otimes 2} \ar[r]^\mu & A}$$ satisfies condition (\ref{cp}). Indeed the diagram 
$$\xymatrix{
Z(A)^{\otimes 2}A \ar[rr]^{\zeta\zeta 1} \ar@/_30pt/[dddddd] _(.3){(z|z)_A} \ar[ddd]^{1z_A} \ar[rd]^{\zeta 11} && A^{\otimes 3} \ar[rr]^{\mu 1} \ar[rrdd]_{1\mu} && A^{\otimes 2} \ar[rddd]^\mu \\ 
& AZ(A)A \ar[ru]^{1\zeta 1} \ar[d]^{1z_A}  \\
& A^{\otimes 2}Z(A) \ar[dr]^{11\zeta} &&& A^{\otimes 2} \ar[rd]^\mu \\ 
Z(A)AZ(A) \ar[ddd]^{z_A1} \ar[ru]^{\zeta 11} \ar[rd]^{11\zeta} && A^{\otimes 3} \ar[drr]_{\mu 1} \ar[rru]^{1\mu} && & A\\ 
& Z(A)A^{\otimes 2} \ar[ur]^{\zeta 11} \ar[d]_{z_A1} &&& A^{\otimes 2} \ar[ru]^\mu \\ 
& AZ(A)A \ar[rd]^{1\zeta 1}  \\ 
AZ(A)^{\otimes 2} \ar[rr]_{1\zeta\zeta} \ar[ru]_{11\zeta} && A^{\otimes 3} \ar[rr]^{1\mu} \ar[rruu]^{\mu 1} && A^{\otimes 2} \ar[uuur]_\mu 
}$$ commutes. Thus, by the universal property,  there is a unique morphism $Z(A)^{\otimes 2}\to Z(A)$ such that the diagram 
$$\xymatrix{ Z(A)^{\otimes 2} \ar[r] \ar[d]_{\zeta\zeta} & Z(A) \ar[d]^\zeta \\ A^{\otimes 2} \ar[r]_\mu &  A}$$ commutes. 

Associativity, commutativity and unit axioms for $Z(A)$ follow from the uniqueness property. To prove associativity all we need to do it to show that the compositions 
$$\xymatrix{ Z(A)^{\otimes 3} \ar[r]^{\mu 1} & Z(A)^{\otimes 2} \ar[r]^\mu & Z(A) }$$
$$\xymatrix{ Z(A)^{\otimes 3} \ar[r]^{\mu 1} & Z(A)^{\otimes 2} \ar[r]^\mu & Z(A) }$$
coincide after being composed with $\zeta$. This is guaranteed by the commutative diagram:
$$\xymatrix{ 
&& Z(A)^{\otimes 2} \ar[rr]^\mu \ar[rrd]_{\zeta\zeta} && Z(A) \ar[rrdd]^\zeta && \\
&&&& A^{\otimes 2} \ar[rrd]_\mu && \\
Z(A)^{\otimes 3} \ar[rruu]^{\mu 1} \ar[rr]^{\zeta\zeta\zeta} \ar[rrdd]_{1\mu} && A^{\otimes 3} \ar[rru]_{\mu 1} \ar[rrd]^{1\mu} &&&& A\\
&&&& A^{\otimes 2} \ar[rru]^\mu && \\
&& Z(A)^{\otimes 2} \ar[rru]^{\zeta\zeta} \ar[rr]_\mu && Z(A) \ar[rruu]_\zeta 
}$$
Similarly, to prove commutativity we need to show that $\mu z_{Z(A)}:Z(A)^{\otimes 2}\to Z(A)$ coincides with $\mu:Z(A)^{\otimes 2}\to Z(A)$ after being composed with $\zeta$. This follows from commutativity of the diagram:
$$\xymatrix{
Z(A)^{\otimes 2} \ar[rrr]^\mu \ar[rd] _{1\zeta} \ar[dddd]_{z_{Z(A)}} &&& Z(A) \ar[rrdd]^\zeta \\
& Z(A)A \ar[r]^{\zeta 1} \ar[dd]^{z_A} & A^{\otimes 2} \ar[rrrd]^\mu \\
&&&&& A\\
& AZ(A) \ar[r]_{1\zeta}  & A^{\otimes 2} \ar[rrru]_\mu \\
Z(A)^{\otimes 2} \ar[rrr]^\mu \ar[ru] _{\zeta 1}  &&& Z(A) \ar[rruu]_\zeta  }$$
Finally, for (one of) the unit axioms it is enough to check that, after composing with $\zeta$, $\mu(\iota 1)$ coincides with $\zeta$. This is guaranteed by the commutative diagram:
$$\xymatrix{ 
Z(A) \ar[rrrr]^\zeta \ar[drr]_\zeta \ar[ddr]^{\iota 1} \ar[dddd]_{\iota 1} &&&& A \\
&& A \ar[dd]_{\iota 1} \ar[urr]_1 \\
& AZ(A) \ar[rd]^{1\zeta} \\
&& A^{\otimes 2} \ar[rruuu]_\mu \\
Z(A)^{\otimes 2} \ar[ruu]^{\zeta 1} \ar[urr]_{\zeta\zeta} \ar[rrrr]_\mu &&&& Z(A) \ar[uuuu]_\zeta
}$$
\end{proof}
\begin{rem}\label{subc}
\end{rem}
Note that the category $\cZ(A)$ of pairs $(Z,\zeta)$, where $Z$ belongs to $\cZ(\cC)$ and $\zeta:Z\to A$ satisfies condition (\ref{cp}), is a full monoidal subcategory of the comma category $F$$\downarrow$$A$ for the forgetful functor $F:\cZ(\cC)\to\cC$. Indeed, tensor product $(ZT,\zeta\tau)$ of two such pairs $(Z,\zeta)$ and $(T,\tau)$ again has this property:
$$\xymatrix{
ZTA \ar[rr]^{\zeta\tau 1} \ar@/_30pt/[dddddd] _(.3){(z|t)_A} \ar[ddd]^{1t_A} \ar[rd]^{\zeta 11} && A^{\otimes 3} \ar[rr]^{\mu 1} \ar[rrdd]_{1\mu} && A^{\otimes 2} \ar[rddd]^\mu \\ 
& ATA \ar[ru]^{1\tau 1} \ar[d]^{1t_A}  \\
& A^{\otimes 2}T \ar[dr]^{11\tau} &&& A^{\otimes 2} \ar[rd]^\mu \\ 
ZAT \ar[ddd]^{z_A1} \ar[ru]^{\zeta 11} \ar[rd]^{11\tau} && A^{\otimes 3} \ar[drr]_{\mu 1} \ar[rru]^{1\mu} && & A\\ 
& ZA^{\otimes 2} \ar[ur]^{\zeta 11} \ar[d]_{z_A1} &&& A^{\otimes 2} \ar[ru]^\mu \\ 
& AZA \ar[rd]^{1\zeta 1}  \\ 
AZT \ar[rr]_{1\zeta\tau} \ar[ru]_{11\tau} && A^{\otimes 3} \ar[rr]^{1\mu} \ar[rruu]^{\mu 1} && A^{\otimes 2} \ar[uuur]_\mu 
}$$
Now the major part of proposition \ref{algz} follows from lemmas \ref{ta},\ref{cc}.

\section{Left centres}\label{}

Now let $\caD$ be a braided monoidal category. Following \cite{os,vz} we define the {\em left centre} $C_l(B)$ of an algebra $B$ in $\caD$ as the terminal object in the category of morphisms $y:Y\to B$ such that the following diagram commutes: 
\begin{equation}\label{lcp}
\xymatrix{ Y\otimes B \ar[r]^{yB} \ar[dd]_{c_{Y,B}} & B\otimes B \ar[rd]^\mu\\ & & B\\ B\otimes Y \ar[r]_{By} & B\otimes B \ar[ur]_\mu }
\end{equation}
Similarly, one can define the {\em right centre} of an algebra in a braided monoidal category.
\begin{prop}\label{algl}
The left centre $C_l(B)$ has a unique structure of algebra in $\caD$ such that the morphism $C_l(B)\to B$ is a homomorphism of algebras in $\caD$. Moreover $C_l(B)$ is a commutative algebra in $\caD$. 
\newline
Similarly for the right centre.
\end{prop}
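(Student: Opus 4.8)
The plan is to imitate the treatment of the full centre in Proposition~\ref{algz} and Remark~\ref{subc}, with the forgetful functor $\cZ(\cC)\to\cC$ replaced by the identity functor $\caD\to\caD$ and the half braiding replaced by the braiding $c$ of $\caD$. Since the identity functor is monoidal and $B$ is an algebra, the comma category $\caD{\downarrow}B$ of objects over $B$ is monoidal, with $(Y,y)\otimes(Y',y')=(Y\otimes Y',\mu_B(yy'))$ and unit $(I,\iota_B)$. First I would check that the full subcategory $\cC_l(B)\subseteq\caD{\downarrow}B$ of pairs $(Y,y)$ satisfying (\ref{lcp}) is a monoidal subcategory. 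Closure under the unit is immediate: $(I,\iota_B)$ satisfies (\ref{lcp}) since $c_{I,B}=1$ and $\mu_B(\iota_B B)=1=\mu_B(B\iota_B)$ by the unit axioms. The essential point is closure under $\otimes$: given $(Y,y),(Y',y')\in\cC_l(B)$ one must show $\mu_B\big((yy')B\big)=\mu_B\big(B(yy')\big)\,c_{Y\otimes Y',B}$.

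This last verification is the diagram chase analogous to the one displayed in Remark~\ref{subc}. The new ingredient is the hexagon axiom, which expresses $c_{Y\otimes Y',B}$ in terms of $c_{Y,B}$ and $c_{Y',B}$ in exactly the way $(z|t)_A$ is built from $z_A$ and $t_A$ there; substituting this and using condition (\ref{lcp}) for $(Y,y)$ and $(Y',y')$ together with naturality of $c$ and associativity of $\mu_B$ closes the diagram. I expect this to be the main obstacle, since it is the only step involving genuine computation rather than formal manipulation.

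With $\cC_l(B)$ recognised as a monoidal subcategory of $\caD{\downarrow}B$, the left centre $C_l(B)$ is by definition its terminal object, so Lemma~\ref{ta} supplies an algebra structure on it in $\caD$, and Lemma~\ref{cc}, applied to the identity functor, gives that the structure map $y\colon C_l(B)\to B$ is a homomorphism of algebras in $\caD$. Uniqueness of this structure follows exactly as in Proposition~\ref{algz}: $\mu_B(yy)\colon C_l(B)^{\otimes 2}\to B$ and $\iota_B\colon I\to B$ both satisfy (\ref{lcp}) — the first because $\cC_l(B)$ is closed under $\otimes$, the second because $(I,\iota_B)\in\cC_l(B)$ — hence factor uniquely through the terminal object $C_l(B)$, and any algebra structure making $y$ a homomorphism must have these factorisations as its multiplication and unit.

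For commutativity in $\caD$ it suffices, by terminality, to show that $\mu_{C_l(B)}c_{C_l(B),C_l(B)}$ and $\mu_{C_l(B)}$ agree after composing with $y$. Using that $y$ is a homomorphism and the naturality of $c$, composing with $y$ turns both into $\mu_B(yy)$, once one knows $\mu_B(yy)\,c_{C_l(B),C_l(B)}=\mu_B(yy)$; and this identity is just condition (\ref{lcp}) for the pair $(C_l(B),y)$ — valid because $C_l(B)$ is itself an object of $\cC_l(B)$ — precomposed with $C_l(B)\otimes y$ and simplified by naturality of $c$. The right centre is handled identically, using the braiding $c^{-1}$ in place of $c$ (equivalently, taking the left centre in $\caD$ equipped with the opposite braiding).
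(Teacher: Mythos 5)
Your proposal is correct and follows essentially the same route as the paper: the paper's proof is declared ``analogous to'' that of Proposition~\ref{algz}, and the accompanying remark makes exactly your point, namely that $\cC_l(B)$ is a full monoidal subcategory of the comma category over $B$ for the identity functor (replacing the forgetful functor and the half braiding by the braiding $c$), so that Lemmas~\ref{ta} and~\ref{cc} supply the algebra structure and the homomorphism property. Your direct terminality arguments for uniqueness and commutativity are the same diagram chases as in Proposition~\ref{algz}, with $z$ replaced by $c$.
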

\begin{proof}
Analogous to the proof of .
\end{proof}
\begin{rem}
\end{rem}
The major part of proposition \ref{algl} follows from lemmas \ref{ta},\ref{cc} if we note that the category $\cC_l(B)$ of pairs $(Y,y)$, with $y:Y\to B$ satisfying condition (\ref{lcp}), is a full monoidal subcategory of the comma category $id_{\cZ(\caD)}$$\downarrow$$B$ for the identity functor $id:\cZ(\caD)\to\cZ(\caD)$.
\medskip

Now assume that the forgetful functor $F:\cZ(\cC)\to\cC$ has a right adjoint $R:\cC\to\cZ(\cC)$ with the natural transformations of the adjunction:
$$\alpha_U:U\to RF(U),\quad \beta_X:FR(X)\to X,\quad U\in\cZ(\cC), X\in\cC.$$
Note that $R$ is automatically lax monoidal, i.e. it is equipped with the morphism $I\to R(I)$ and the natural transformation $R_{X,Y}:R(X)\otimes R(Y)\to R(X\otimes Y)$, which satisfy usual coherence axioms of a monoidal functor, but are not necessarily isomorphisms. 
\newline
Indeed, the morphism is given by the composite
$$\xymatrix{I \ar[r]^(.4){\alpha_I} & RF(I) \ar[r] & R(I) }$$ while the natural transformation is 
$$\xymatrix{ R(X)\otimes R(Y) \ar[d]_{\alpha_{R(X)R(Y)} } && R(X\otimes Y)\\ RF(R(X)\otimes R(Y)) \ar[rr]^{R(F_{R(X),R(Y)})} && R(FR(X)\otimes FR(Y)) \ar[u]_{R(\beta_X\beta_Y)}  }$$ Here $F_{U,V}:F(U\otimes V)\to F(U)\otimes F(V)$ is the natural isomorphism (the monoidal structure of $F$), which is in fact the identity for the forgetful functor $F$. 

The following statement is well-known. We add a proof for the sake of completeness.
\begin{lem}\label{monadj}
The adjunction natural transformations $\alpha$ and $\beta$ are monoidal.
\end{lem}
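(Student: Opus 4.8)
The plan is to verify the monoidality of the unit $\alpha_U:U\to RF(U)$ and counit $\beta_X:FR(X)\to X$ directly against the coherence diagrams defining a monoidal natural transformation, using the explicit formulas just given for the lax monoidal structure of $R$ together with the fact that $F$ is strong (indeed, with identity monoidal structure, $F_{U,V}=1$). Concretely, a monoidal natural transformation between lax monoidal functors must commute with the unit constraints and with the multiplication constraints. For $\beta$, the composite $FR$ is lax monoidal via $FR_{X,Y}\circ F_{R(X),R(Y)}^{-1}$ and the composite ``identity functor on $\cC$'' is strict, so the two conditions to check are that $\beta_I\circ (\text{unit of }FR) = \iota_{\cC}$ (the identity on $I$, viewing $I$ as its own unit) and that $\beta_{X\otimes Y}\circ FR_{X,Y}\circ F_{R(X),R(Y)}^{-1} = (\beta_X\otimes\beta_Y)$ as morphisms $FR(X)\otimes FR(Y)\to X\otimes Y$. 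For $\alpha$, dually, $RF$ is lax monoidal and the source functor is the identity (strict), so one checks $\alpha_I$ against the units and $\alpha_{U\otimes V}$ against $R_{F(U),F(V)}\circ (\text{from }F_{U,V})$ and $\alpha_U\otimes\alpha_V$.

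The key steps, in order, are: first, write out the composite lax structures explicitly — for $FR$ one unfolds $R_{X,Y}$ using its definition through $\alpha_{R(X)R(Y)}$, $R(F_{R(X),R(Y)})$, and $R(\beta_X\beta_Y)$, then applies $F$ (which kills the outer $F$'s and $F_{-,-}$'s since $F$ is strict); second, substitute into the two pentagon/triangle-type compatibility squares; third, collapse the resulting diagram using the two triangle identities of the adjunction, namely $\beta_{F(U)}\circ F(\alpha_U)=1_{F(U)}$ and $F(\alpha_?)$-type cancellations, together with naturality of $\beta$ (and of $\alpha$ in the other case). The naturality square for $\beta$ applied to the morphism $\beta_X\otimes\beta_Y$ — or more precisely applied to the counit components — is what lets the inner $R(\beta_X\beta_Y)$ and an outer $\beta$ telescope. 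For the unit compatibility one uses $z_I=1$ (so that the unit object of $\cZ(\cC)$ is $(I,1)$ and $F$ sends it to $I$ on the nose) and the definition of the unit map $I\to R(I)$ as $R(\iota)\circ\alpha_I$ composed appropriately.

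I would organize the verification as a single large commutative diagram for each of $\alpha$ and $\beta$, whose outer boundary is the desired monoidality square and whose interior is tiled by: (i) naturality squares of $\alpha$ or $\beta$; (ii) the triangle identities; (iii) functoriality of $R$ applied to instances of the coherence of $F_{-,-}$; and (iv) the bare definitions of $R_{X,Y}$ and of the unit $I\to R(I)$. Since $F$ is strict, most of the $F_{-,-}$ tiles degenerate to identities, which considerably shortens the diagram; this is the only place the strictness hypothesis is used, and dropping it would just reinsert those coherence cells without changing the logic.

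The main obstacle I anticipate is purely bookkeeping: keeping the direction of each adjunction triangle identity straight and identifying \emph{which} triangle identity ($F\alpha$ then $\beta F$, versus $\alpha R$ then $R\beta$) applies at each node, since $R$ appears inside its own structure maps. Once the large diagram is drawn the cancellations are forced, so there is no genuine mathematical difficulty — this is why the statement is called ``well-known'' — but care is needed to ensure the tiles actually close rather than merely look plausible.
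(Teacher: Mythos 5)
Your proposal is correct and follows essentially the same route as the paper: the paper's proof consists precisely of two large commutative diagrams (one for $\alpha$, one for $\beta$) whose outer boundaries are the monoidality squares and whose interiors are tiled by naturality squares of $\alpha$ and $\beta$, the adjunction triangle identities, the coherence of $F_{-,-}$ under $R$, and the definition of $R_{X,Y}$ and the unit map $I\to R(I)$ — exactly the ingredients you list. The only difference is cosmetic: the paper keeps the (identity) structure maps $F_{-,-}$ in the diagrams rather than suppressing them via strictness, and it does not spell out the unit-constraint check, which you rightly note is immediate.
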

\begin{proof}
Monoidality of $\beta$ follows from the commutative diagram:
$$\xygraph{
*+{XY}="l" 
(
:@/^30pt/[u(2.5)r(6)] *+{RF(XY)} ^{\alpha_{XY}}
 (
 :@/^25pt/[d(2.5)r(2.5)] *+{R(F(X)F(Y))}="r" ^{R(F_{X,Y})}
 :@{=}[l(2.5)u(1)] *+{R(F(X)F(Y))}
 :[d(2)] *+{R(FRF(X)FRF(Y))}="d" _(.3){R(F(\alpha_X)F(\alpha_Y))}
 :"r" _{R(\beta_{F(X)}\beta_{F(Y)})}
 ,
 :@/_20pt/[d(2.5)l(3)] *+{RF(RF(X)RF(Y))}="m" _(.7){RF(\alpha_X\alpha_Y)}
 :"d" _{R(F_{RF(X),RF(Y)})}
 )
,
:@/_30pt/[d(2.5)r(6)] *+{RF(X)RF(Y)} _{\alpha_X\alpha_Y}
 (
 :@/_25pt/"r" _{R_{F(X),F(Y)}}
 ,
 :@/^20pt/"m" ^(.7){\alpha_{RF(X)RF(Y)}}
 )
)
}$$
Monoidality of $\beta$ follows from the commutative diagram:
$$\xygraph{
*+{F(R(X)R(Y))}="l" 
(
:@/^20pt/[u(2.5)r(2.5)] *+{FR(XY)}="u" ^{F(R_{X,Y})}
:@/^30pt/[d(2.5)r(6)] *+{XY}="r" ^{\beta_{XY}}
,
:@/_20pt/[d(2.5)r(2.5)] *+{FR(X)FR(Y)}="d" _{F_{R(X),R(Y)}}
:@/_30pt/"r" _{\beta_X\beta_Y}
,
:[u(1)r(2.5)] *+{FRF(R(X)R(Y))} ^{F(\alpha_{R(X)R(Y)})}
 (
 :[d(1)r(3)] *+{FR(FR(X)FR(Y))} ^{FR(F_{R(X),R(Y)})}
  (
  :@/_20pt/"u" _(.3){FR(\beta_X\beta_Y)}
  ,
  :@/^20pt/"d" ^(.3){\beta_{FR(X),FR(Y)}}
  )
 ,
 :[d(2)] *+{F(R(X)R(Y))} ^(.7){\beta_{F(R(X)R(Y))}}
 :@{=}"l"
 )
)
}$$
\end{proof}
The lax monoidal structure on $R$ allows us to transport algebras from $\cC$ to $\cZ(\cC)$. If $A$ is an algebra in $\cC$, $R(A)$ is an algebra in $\cZ(\cC)$ with the unit map 
$$\xymatrix{I\ar[r] & R(I)\ar[r]^{R(\iota_A)} & R(A)}$$ and the multiplication 
$$\xymatrix{R(A)\otimes R(A)\ar[r] & R(A\otimes A) \ar[r]^(.6){R(\mu_A)} & R(A)}$$
\begin{theo}\label{}
Suppose that the natural transformation $\beta$ of the adjunction is epi. Then for any algebra $A$ in a monoidal category $\cC$
$$Z(A) = C_l(R(A)).$$
\end{theo}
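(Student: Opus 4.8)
The plan is to realise both $Z(A)$ and $C_l(R(A))$ as the terminal object of one and the same category, transporting data along the adjunction $F\dashv R$. Recall (Remark \ref{subc}) that $Z(A)$ is the terminal object of the full monoidal subcategory $\cZ(A)\subseteq F\downarrow A$ of pairs $(Z,\zeta)$, $\zeta\colon Z\to A$, satisfying (\ref{cp}); and (Proposition \ref{algl} and the remark following it) that $C_l(R(A))$ is the terminal object of the full monoidal subcategory $\cC_l(R(A))\subseteq id_{\cZ(\cC)}\downarrow R(A)$ of pairs $(Y,y)$, $y\colon Y\to R(A)$, satisfying (\ref{lcp}) with $B=R(A)$. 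The adjunction supplies, naturally in $Y\in\cZ(\cC)$, a bijection $\Phi_Y\colon\Hom_{\cZ(\cC)}(Y,R(A))\to\Hom_{\cC}(F(Y),A)$ with $\Phi_Y(y)=\beta_A\circ F(y)$ and inverse $\zeta\mapsto R(\zeta)\circ\alpha_Y$; equivalently, an isomorphism of comma categories $\Phi\colon id_{\cZ(\cC)}\downarrow R(A)\to F\downarrow A$ which is the identity on first components. First I would check that $\Phi$ is \emph{monoidal}: since the algebra structure on $R(A)$ is the one transported along the lax monoidal functor $R$, and the monoidal structure $F_{-,-}$ of $F$ is the identity, this reduces to $\alpha$ and $\beta$ being monoidal natural transformations, which is Lemma \ref{monadj}. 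A monoidal isomorphism of categories preserves terminal objects together with their canonical algebra structures (Lemmas \ref{ta}, \ref{cc}), and as $\Phi$ is the identity on first components these structures agree as algebras in $\cZ(\cC)$. So it suffices to show that $\Phi$ restricts to an isomorphism $\cC_l(R(A))\cong\cZ(A)$; this will give $Z(A)=C_l(R(A))$ as (commutative) algebras in $\cZ(\cC)$, compatibly with the structure morphisms under $\Phi$.

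The heart of the proof is then the claim that, for $y\colon Y\to R(A)$ and $\zeta:=\Phi_Y(y)\colon F(Y)\to A$, condition (\ref{lcp}) on $y$ is equivalent to condition (\ref{cp}) on $\zeta$. I would first rewrite (\ref{lcp}): since $F$ is faithful it is an equality of two morphisms $F(Y)\otimes FR(A)\to FR(A)$ in $\cC$, and since $\Phi$ is a bijection it holds if and only if it holds after postcomposition with $\beta_A$. Using that $\mu_{R(A)}=R(\mu_A)\circ R_{A,A}$, together with naturality and monoidality of $\beta$ (the latter being Lemma \ref{monadj}, with $F_{-,-}=id$), one obtains the identity $\beta_A\circ F(\mu_{R(A)})=\mu_A\circ(\beta_A\otimes\beta_A)$; feeding this in, and recalling that the braiding of $\cZ(\cC)$ is $c_{Y,R(A)}=\sigma_{FR(A)}$ with $\sigma$ the half-braiding of $Y$, one finds that (\ref{lcp}) on $y$ is equivalent to the equality of morphisms $F(Y)\otimes FR(A)\to A$
$$\mu_A\circ(\zeta\otimes\beta_A)\ =\ \mu_A\circ(\beta_A\otimes\zeta)\circ\sigma_{FR(A)}.\qquad(*)$$
On the other hand, unwinding the diagram (\ref{cp}), condition (\ref{cp}) on $\zeta$ is the equality $\mu_A\circ(\zeta\otimes 1_A)=\mu_A\circ(1_A\otimes\zeta)\circ\sigma_A$ of morphisms $F(Y)\otimes A\to A$. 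Precomposing its two sides with $1_{F(Y)}\otimes\beta_A$ and using naturality of the half-braiding, $\sigma_A\circ(1_{F(Y)}\otimes\beta_A)=(\beta_A\otimes 1_{F(Y)})\circ\sigma_{FR(A)}$, turns them exactly into the two sides of $(*)$. Hence (\ref{cp}) $\Rightarrow$ $(*)$, i.e. (\ref{cp}) $\Rightarrow$ (\ref{lcp}), is immediate and uses no hypothesis on $\cC$.

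The converse, (\ref{lcp}) $\Rightarrow$ (\ref{cp}), equivalently $(*)$ $\Rightarrow$ (\ref{cp}), is exactly where the hypothesis that $\beta$ is epi enters, and I expect this to be the only genuinely delicate point. By the computation above the two sides of (\ref{cp}) agree after precomposition with $1_{F(Y)}\otimes\beta_A$, so it is enough that $1_{F(Y)}\otimes\beta_A$ be an epimorphism; this is the one use of epi-ness of $\beta$ (one needs the epimorphism $\beta_A$ to stay epi after tensoring with the fixed object $F(Y)$, which is automatic whenever the tensor product of $\cC$ is suitably exact, as in the modular-category situation motivating the theorem). With this granted, $\Phi$ restricts to the desired isomorphism $\cC_l(R(A))\cong\cZ(A)$, whence $Z(A)=C_l(R(A))$. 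Concretely, the two mutually inverse comparison maps are: the $R$-adjoint $Z(A)\to R(A)$ of the structure morphism $Z(A)\to A$ satisfies (\ref{lcp}) (by (\ref{cp}) $\Rightarrow$ $(*)$ $\Leftrightarrow$ (\ref{lcp})), yielding a canonical morphism $Z(A)\to C_l(R(A))$ in $\cZ(\cC)$; and, writing $c\colon C_l(R(A))\to R(A)$ for the structure morphism, the pair $(C_l(R(A)),\,\beta_A\circ F(c))$ satisfies (\ref{cp}) — this being the one place where $(*)$ $\Rightarrow$ (\ref{cp}) is invoked — yielding a canonical morphism $C_l(R(A))\to Z(A)$; the two composites are identities by the terminality clauses, and both maps are homomorphisms of algebras in $\cZ(\cC)$ respecting the structure morphisms, since $\Phi$ is a monoidal isomorphism of categories (or, directly, by the uniqueness argument of Proposition \ref{algz}).
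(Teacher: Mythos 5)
Your proposal is correct and follows essentially the same route as the paper: you transport conditions (\ref{cp}) and (\ref{lcp}) across the adjunction $F\dashv R$ (your $\Phi$ and its inverse are exactly the paper's pair of quasi-inverse functors between $\cZ(A)$ and $\cC_l(R(A))$), use Lemma \ref{monadj} to know $\beta_A$ is an algebra homomorphism, and invoke the epi hypothesis only for the direction (\ref{lcp})$\Rightarrow$(\ref{cp}). Your explicit observation that what is really cancelled is $1_{F(Y)}\otimes\beta_A$ (so one needs this to remain epi after tensoring) is the same implicit step the paper takes when it says ``together with the fact that $\beta$ is epi'', so it is a shared caveat rather than a divergence.
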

\begin{proof}
We are going to show that the adjunction 
$$\cC(F(Z),X)\simeq\cZ(\cC)(Z,R(X))$$
defines a monoidal equivalence between $\cZ(A)$ and $\cC_l(R(A))$. We start by constructing the functor $\cZ(A)\to\cC_l(R(A))$.
\newline
Let $Z\in\cZ(\cC)$ and $f:F(Z)\to A$ satisfy condition (\ref{cp}). Then the adjoint morphism $\tilde f:Z\to R(A)$, which is given by the composite 
$$\xymatrix{Z \ar[r]^{\alpha_Z} & RF(Z) \ar[r]^{R(f)} & R(A) ,}$$
satisfies condition (\ref{lcp}):
$$\xymatrix{ZR(A) \ar[dddd]_{z_{R(A)}} \ar@/^15pt/[rr]^{\tilde f1} \ar[r]_(.4){\alpha_Z1} & RF(Z)R(A) \ar[r]_{R(f)1} \ar[d] & R(A)^{\otimes 2} \ar[d] \ar@/^15pt/[ddr]^\mu & \\  
& R(F(Z)A) \ar[r]_{R(f1)} \ar[dd]_{R(z_A)} & R(A^{\otimes 2}) \ar[dr]_{R(\mu)} & \\
&&& R(A)\\
& R(AF(Z)) \ar[r]^{R(1f)} & R(A^{\otimes 2}) \ar[ur]^{R(\mu)} & \\
R(A)Z  \ar@/_15pt/[rr]_{1\tilde f} \ar[r]^(.4){1\alpha_Z} & R(A)RF(Z) \ar[r]^{1R(f)} \ar[u] & R(A)^{\otimes 2} \ar[u] \ar@/_15pt/[uur]_\mu &  \\
}$$
Here commutativity of the left rectangular face follows from the commutativity of
$$
\xygraph{ !{0;/r3.2pc/:;/u4.3pc/::}[]*+{ZR(A)} 
(
:@/_50pt/[d(6)]*+{R(A)Z} _(.4){z_{R(A)}}
 (
 :[r(4)]*{R(A)RF(Z)} _{1\alpha_Z}
  (
  :@/_15pt/[u(2)r(3)]*+{R(AF(Z))}="rd"  
  ,
  :[lu]*+{RF(R(A)RF(Z))}="d" _{\alpha_{R(A)RF(Z)}}
  :[u(.7)r(.7)]*+{R(FR(A)FRF(Z))}="d2"
   (
   :"rd" _{R(\beta_A\beta_{F(Z)})}
   ,
   :[r(.7)u(.7)]*+{R(FR(A)F(Z))}="mrd" _(.6){R(1\beta_{F(Z)})}
   :"rd" ^{R(\beta_A1)}
   )
  )
 ,
 :[r(.7)u(.7)]*{RF(R(A)Z)}="ld" _{\alpha_{R(A)Z}}
  (
  :"d" _{RF(1\alpha_Z)}
  ,
  :[ur(.5)]*+{R(FR(A)F(Z))}="lm"
   (
   :"d2" _{R(1F(\alpha_Z))}
   ,
   :@/^15pt/@{=}"mrd"
   )
  )
 )
,
:[d(.7)r(.7)]*+{RF(ZR(A))} ^{\alpha_{ZR(A)}}
 (
 :@/_40pt/"ld" _(.6){RF(z_{R(A)})}
 ,
 :[dr(.5)]*+{R(F(Z)FR(A))} 
  (
  :@/_30pt/"lm" ^{R(z_{FR(A)})}
  ,
  :@/_15pt/@{=}[d(1.5)r(3.3)]*+{R(F(Z)FR(A))}="mu"
  :[r(2.5)u(.5)]*{R(F(Z)A)}="ru" _{R(1\beta_A)}
  :"rd" ^{R(z_A)}
  ,
  :[r(2.6)d(.7)]*+{R(FRF(Z)FR(A))}="um" ^{R(F(\alpha_Z)1)}
   (
   :"ru" ^{R(\beta_{F(Z)}\beta_A)}
   ,
   :"mu" ^{R(\beta_{F(Z)}1)}
   )
  )
 ,
 :[r(2.3)d(.3)]*+{RF(RF(Z)R(A))}="uum" ^{RF(\alpha_Z1)}
 :"um" 
 )
,
:[r(4)]*+{RF(Z)R(A)} ^{\alpha_Z1}
 (
 :"uum" ^{\alpha_{RF(Z)R(A)}}
 ,
 :@/^15pt/"ru" 
 )
) 
}$$
which, in its turn, follows from commutativity of 
$$\xymatrix{F(ZU) \ar[r] \ar[d]_{F(z_U)} & F(Z)F(U) \ar[d]^{z_{F(U)}} \\ F(UZ) \ar[r] & F(U)F(Z) }$$
(second left curved square in the diagram above). 
\newline
Thus we have a functor
$$\cZ(A)\to\cC_l(R(A)),\quad (Z,\zeta)\mapsto (Z,\overline\zeta).$$ Its monoidal property follows from monoidality of $F$ (and $R$).

We conclude by constructing a quasi-inverse functor $\cC_l(R(A))\to\cZ(A)$. For an object of $\cC_l(R(A))$, which is an object $Z\in\cZ(\cC)$ and a morphism $g:Z\to R(A)$, define $\tilde g:F(Z)\to A$ as the composition
$$\xymatrix{F(Z) \ar[r]^{F(g)} & FR(A) \ar[r]^{\beta_A} & A.}$$ The following diagram (together with the fact that $\beta$ is epi) shows that for $g$, satisfying condition (\ref{lcp}), $\tilde g$ satisfies condition (\ref{cp}):

$$
\xygraph{ !{0;/r3.5pc/:;/u4pc/::}[]*+{F(Z)FR(A)} 
(
:[ul]*+{F(Z)A}="tl" _{1\beta_A}
:@/_35pt/[d(6)]*+{AF(Z)}="bl" _{z_A}
 (
 :[rr]*{AFR(A)}="bm" ^{1F(g)}
 :[rr]*{A^2}="br" ^{1\beta_A}
 :@/_20pt/[u(3)r(2.5)]*{A}="r" _\mu
 ,
 :@/_15pt/"br"_{1\tilde g}
 )
,
:@/_20pt/[d(4)]*_{FR(A)F(Z)}="dl" _{z_{FR(A)}}
 (
 :"bl" _{\beta_A1}
 ,
 :[rr]*{FR(A)^2} ^{1F(g)}
  (
  :"bm" _{\beta_A1}
  ,
  :"br" ^{\beta_A^2}
  ,
  :[ur]*{F(R(A)^2)}="mrd"
  :[ur]*{FR(A)}="m" _{F(\mu)}
  :"r" ^{\beta_A}
  )
 ,
 :[ru]*{F(R(A)Z)}="mld"
 :"mrd" ^{F(1g)}
 )
,
:[dr]*+{F(ZR(A))}
 (
 :@/_10pt/"mld" _{F(z_{R(A)})}
 ,
 :[rr]*{F(R(A)^2)}="mru" ^{F(g1)}
 :"m" ^{F(\mu)}
 )
,
:[rr]*+{FR(A)^2} ^{F(g)1}
 (
 :"mru"
 ,
 :[ur]*{A^2}="tr" _{\beta_A^2}
 :@/^20pt/"r" ^\mu
 ,
 :[ul]*{FR(A)A}="tm" _{1\beta_A}
  (
  :"tr" ^{\beta_A1}
  :@{}"tl"
   (
   :"tm" ^(.6){F(g)1}
   ,
   :@/^15pt/"tr" ^{\tilde g1}
   )
 )
)
}$$
Here commutativity of the rightmost cells of the diagram is equivalent to the fact that $\beta_A:FR(A)\to A$ is a homomorphism of algebras, which follows from lemma \ref{monadj}. 
\end{proof}

\section{Morita invariance}\label{mi}

A right {\em module} over an algebra $A$ is a pair $(M,\nu)$, where $M$ is an object of $\cC$ and $\nu:M\otimes A\to M$
is a morphism ({\em action map}), such that $$\nu(\nu\otimes A) = \nu(M\otimes\mu).$$ A {\em homomorphism} of right
$A$-modules $M\to N$ is a morphism $f:M\to N$ in $\cC$ such that $$\nu_N(f\otimes A) = f\nu_M.$$

Right modules over an algebra $A\in\cC$ together with module homomorphisms form a category $\cC_A$. The forgetful
functor $\cC_A\to\cC$ has a right adjoint, which sends an object $X\in\cC$ into the {\em free} $A$-module $X\otimes A$,
with $A$-module structure defined by
$$\xymatrix{X\otimes A\otimes A \ar[r]^{I\mu} & X\otimes A.}$$
Since the action map $M\otimes A\to M$ is an epimorphism of right $A$-modules any right $A$-module is a quotient of a
free module.

Categories of modules over algebras are examples of module categories. A (left) {\em module} category \cite{qu} over a monoidal category $\cC$ is a category $\cM$ together with a functor (an {\em action functor}):
$$\cC\times\cM\to\cM,\quad (X,M)\mapsto X*M,$$
and a functorial isomorphism 
$$a_{X,Y,M}:X*(Y*M)\to (X\otimes Y)*M,\quad X,Y\in\cC,\ M\in\cM,$$
such that the diagram 
$$
\xygraph{ !{0;/r4.5pc/:;/u4.5pc/::}[]*+{X*(Y*(Z*M))} (
  :[u(.7)r(1.5)]*+{(X\otimes Y)*(Z*M)} ^{a_{X,Y,Z*M}}
  :[d(.7)r(1.5)]*+{((X\otimes Y)\otimes Z)*M}="r" ^{a_{XY,Z,M}}
  ,
  :[r(.5)d(.8)]*+!R(.3){X*((Y\otimes Z)*M)} _{X*a_{Y,Z,M}}
  :[r(2)]*+!L(.3){(X\otimes(Y\otimes Z))*M} _{a_{X,YZ,M}}
  : "r" _{\alpha_{X,Y,Z}*M}
)
}$$
commutes for any $X,Y,Z\in\cC,\ M\in\cM$. Here, for aesthetic reason,  we insert the associator $\alpha$ for the tensor product in $\cC$. 
Equivalently $\cM$ is a module category over $\cC$ if there is given a monoidal functor $\cC\to\End(\cM)$ to the monoidal category $\End(\cM)$ of endofunctors of $\cM$ (with monoidal structure given by composition of functors). 
\newline
A functor $F:\cM\to\cN$ between $\cC$-module categories is a {\em $\cC$-module} functor if it comes equipped with a natural collection of isomorphisms $F_{X,M}:F(X*M)\to X*F(M)$ such that the following diagram commutes:
$$
\xygraph{ !{0;/r4.5pc/:;/u4.5pc/::}[]*+{F(X*(Y*M))} (
  :[u(.7)r(1.5)]*+{F((X\otimes Y)*M)} ^{F(a_{X,Y,M})}
  :[d(.7)r(1.5)]*+{(X\otimes Y)*F(M)}="r" ^{F_{XY,M}}
  ,
  :[r(.5)d(.8)]*+!R(.3){X*F(Y*M)} _{F_{X,Y*M}}
  :[r(2)]*+!L(.3){X*(Y*F(M))} _{X*F_{Y,M}}
  : "r" _{a_{X,Y,M}}
)
}$$ Clearly, $\cC$-module structures on functors are composable: the composite of two $\cC$-module functors has a canonical structure of $\cC$-module functor. 
\newline
Note that if $(M,\nu)$ is a right module over an algebra $A$ in a monoidal category $\cC$ then for any $X\in\cC$ the tensor product $X\otimes M$ has a structure of a $A$-module $X\otimes:\nu:X\otimes M\otimes A \to X\otimes M$. Thus the category of (right) modules $\cC_A$ over an algebra $A$ in a monoidal category $\cC$ is a left $\cC$-module category with respect to the action functor
$$\cC\times\cC_A\to\cC_A,\quad (X,(M,\nu))\mapsto (X\otimes M, X\otimes\nu).$$
The forgetful functor $\cC_A\to\cC$ and its right adjoint have natural $\cC$-module structures, giving an adjoint pair of $\cC$-module functors. 

Let $\cM$ be a $\cC$-module category. With an object $(Z,z)$ of the monoidal centre of $\cC$ one can associate a functor
$$\cM\to\cM,\quad M\mapsto Z*M,$$ which comes equipped with a $\cC$-module structure
$$\xymatrix{Z*(X*M) \ar[r]^{a_{Z,X,M}} & (Z\otimes X)*M \ar[r]^{z_X*M} & (X\otimes Z)*M \ar[r]^{a_{X,Z,M}^{-1}} & X*(Z*M)}$$
Denote by $\End_\cC(\cM)$ the monoidal category of $\cC$-module endofunctors of $\cC$-module category $\cM$. The above construction defines a monoidal functor $E:\cZ(\cC)\to\End_\cC(\cM)$. 

Two algebras $A,B$ in a monoidal category $\cC$ are said to be {\em Morita equivalent} if their categories of right modules are equivalent as module categories over $\cC$. Here we are going to show that the full centre is an invariant of Morita equivalence. We will do it by extending the notion of full centre from algebras to module categories.

The {\em centre} of a (left) module category $\cM$ over a monoidal category $\cC$ is the object $Z(\cM)$ terminal in the comma category $E$$\downarrow$$I$, corresponding to the monoidal functor $E:\cZ(\cC)\to\End_\cC(\cM)$ and the (unit) algebra $I\in\End_\cC(\cM)$. In other words, the centre $Z(\cM)$ is the terminal object among pairs $(Z,f)$, where $Z\in\cZ(\cC)$ and $f_M:Z*M\to M$ is a collection of morphisms in $\cM$, natural in $M$, such that the following diagram commutes (for any $X\in\cC$ and $M\in\cM$):
\begin{equation}\label{cn}
\xymatrix{( Z\otimes X)*M \ar[r]^{a_{Z,X,M}^{-1}} \ar[dd]_{z_X*M} & Z*(X*M) \ar[rd]^{f_{X*M}} \\ & & X*M \\ (X\otimes Z)*M \ar[r]_{a_{X,Z,M}^{-1}} & X*(Z*M) \ar[ur]_{X*f_M} }
\end{equation}

\begin{prop}\label{cmc}
The centre $Z(\cM)$ of a module category $\cM$ over a monoidal category $\cC$ is a commutative algebra in $\cZ(\cC)$. 
\end{prop}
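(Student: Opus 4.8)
The plan is to mimic exactly the structure of the proof of Proposition \ref{algz}, replacing the comma category $F\downarrow A$ by the comma category $E\downarrow I$ for the monoidal functor $E:\cZ(\cC)\to\End_\cC(\cM)$ and the unit algebra $I\in\End_\cC(\cM)$. First I would observe, in the spirit of Remark \ref{subc}, that the category of pairs $(Z,f)$ with $f$ satisfying condition (\ref{cn}) is a full monoidal subcategory of the comma category $E\downarrow I$. The key point here is that the defining condition (\ref{cn}) is closed under tensor products: given $(Z,f)$ and $(W,g)$, the natural transformation $f|g$ associated to the tensor product $(Z\otimes W, z|w)$ — which is the composite obtained by acting first by $g$ and then by $f$, inserted into the evaluation functor $E\downarrow I\to\End_\cC(\cM)$ — again satisfies (\ref{cn}). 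This is verified by a hexagon/pentagon-type diagram chase analogous to the big diagram following Remark \ref{subc}, using the definition of $z|w$ together with the naturality of the half-braidings and the module associativity constraint $a_{X,Y,M}$.

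Once this is established, the machinery of Section 1 does the rest. The unit object $(I,\mathrm{id})$ of $\cZ(\cC)$, with the identity collection $I*M=M\to M$, trivially satisfies (\ref{cn}), so the subcategory is nonempty and has a unit; this makes it a genuine monoidal category, and $Z(\cM)$ is by definition its terminal object. By Lemma \ref{ta}, a terminal object of a monoidal category is an algebra, and a terminal object of a \emph{braided} monoidal category is a \emph{commutative} algebra. Since $\cZ(\cC)$ is braided and the forgetful-type functor from this subcategory of $E\downarrow I$ to $\cZ(\cC)$ is monoidal (as is the evaluation functor, in parallel with the discussion of $F\downarrow A$ before Lemma \ref{cc}), the braiding on $\cZ(\cC)$ restricts to the subcategory, so $Z(\cM)$ acquires the structure of a commutative algebra in $\cZ(\cC)$. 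The uniqueness of this structure and the fact that the structure maps are compatible with the projection to $I\in\End_\cC(\cM)$ follow from the terminality (uniqueness of morphisms into $Z(\cM)$), exactly as in Proposition \ref{algz}; one could also cite Lemma \ref{cc} to see that the canonical map $E(Z(\cM))\to I$ is an algebra homomorphism.

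The main obstacle — really the only nontrivial content — is the diagram chase showing that condition (\ref{cn}) is stable under the tensor product of $\cZ(\cC)$, i.e. producing the analogue of the big commuting diagram after Remark \ref{subc} but now with the module associators $a_{X,Z,M}$, $a_{Z,X,M}$ in place of the ordinary associators and with the multiplication $\mu$ of $A$ replaced by composition of endofunctors (which for the target algebra $I$ is essentially trivial, simplifying matters). The potential subtlety is bookkeeping the module-category coherence constraint when reassociating $(Z\otimes W)*(X*M)$, $Z*(W*(X*M))$, $Z*(X*(W*M))$, and so on; the pentagon axiom for $\cM$ and the hexagon defining $z|w$ in $\cZ(\cC)$ are precisely what make the two routes around this diagram agree. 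Since the paper's convention is to present such verifications as (suppressed or drawn) commutative diagrams rather than equations, I would present this step as a single large commutative diagram and remark that commutativity follows from the module pentagon, the naturality of the half-braidings, and the definition of the tensor product in $\cZ(\cC)$, after which the proposition follows from Lemmas \ref{ta} and \ref{cc} just as in the cases of $Z(A)$ and $C_l(B)$.
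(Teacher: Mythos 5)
Your overall strategy---realising the pairs $(Z,f)$ subject to (\ref{cn}) inside the comma category $E$$\downarrow$$I$ and then invoking Lemmas \ref{ta} and \ref{cc}---is the route the paper itself makes available (Remark \ref{subc} and the proof of Proposition \ref{algz}, to which the paper's proof of this proposition simply refers). But you have located the work in the wrong place. The step you call ``the only nontrivial content'', namely closure of condition (\ref{cn}) under the tensor product, is vacuous: a morphism $E(Z)\to I$ in $\End_\cC(\cM)$ is by definition a natural transformation compatible with the $\cC$-module structures, and that compatibility is precisely diagram (\ref{cn}). So the category of pairs satisfying (\ref{cn}) is not a proper subcategory that needs a closure check; it \emph{is} the comma category $E$$\downarrow$$I$, which is monoidal by the general construction preceding Lemma \ref{cc} (applied to the monoidal functor $E$ and the unit algebra $I$). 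No analogue of the big diagram of Remark \ref{subc} is required here.

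The genuine gap is in the commutativity, which is the part of the statement that actually needs an argument. To apply the second clause of Lemma \ref{ta} you need $E$$\downarrow$$I$ to be braided in such a way that the forgetful functor to $\cZ(\cC)$ is braided, and you justify this only by monoidality of that functor and the assertion that ``the braiding on $\cZ(\cC)$ restricts''. Monoidality gives nothing of the sort: $\End_\cC(\cM)$ is not braided, and a comma category carries no braiding for free. What must be checked is that for objects $(Z,f)$ and $(W,g)$ the braiding $c_{Z,W}=z_W$ of $\cZ(\cC)$ is a morphism $(Z\otimes W,\overline{f\otimes g})\to (W\otimes Z,\overline{g\otimes f})$ in $E$$\downarrow$$I$, i.e.\ that $g_M\circ (W*f_M)\circ a^{-1}_{W,Z,M}\circ (z_W*M) = f_M\circ (Z*g_M)\circ a^{-1}_{Z,W,M}$ for all $M\in\cM$. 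This follows from condition (\ref{cn}) for $(Z,f)$ with $X=W$ together with naturality of $f$ applied to $g_M$, and it is the exact analogue of the commutativity diagram in the paper's proof of Proposition \ref{algz} (where $z_{Z(A)}$ appears). Once this short check is included, your argument is complete and matches the paper's intended proof.
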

\begin{proof}
analogous to the proof of proposition \ref{algz}.
\end{proof}

\begin{theo}\label{}
Let $A$ be an algebra in a monoidal category $\cC$. Then
$$Z(\cC_A) = Z(A).$$
\end{theo}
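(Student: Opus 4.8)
The plan is to exhibit an isomorphism of monoidal categories, commuting with the forgetful functors to $\cZ(\cC)$, between the category $\cZ(A)$ of pairs $(Z,\zeta)$ satisfying (\ref{cp}) — whose terminal object is $Z(A)\to A$ — and the comma category $E\downarrow I$ attached to $\cM=\cC_A$, whose terminal object is $Z(\cC_A)$. Since the action of $\cC$ on $\cC_A$ is $X*M=X\otimes M$ with module structure $X\otimes\nu_M$, an object of $E\downarrow I$ is exactly a $Z\in\cZ(\cC)$ together with a natural family of $A$-module maps $f_M\colon Z\otimes M\to M$ making (\ref{cn}) commute; in the strict setting (\ref{cn}) unravels to $f_{X\otimes M}=(X\otimes f_M)\circ(z_X\otimes M)$, i.e. $f$ is a $\cC$-module natural transformation $E(Z)\Rightarrow\mathrm{id}_{\cC_A}$.

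First I would build the comparison functor $\Phi\colon\cZ(A)\to E\downarrow I$. Given $(Z,\zeta)$ satisfying (\ref{cp}), set
$$f^\zeta_M\ :=\ \bigl(\,Z\otimes M\xrightarrow{\ z_M\ }M\otimes Z\xrightarrow{\ M\otimes\zeta\ }M\otimes A\xrightarrow{\ \nu_M\ }M\,\bigr).$$
One checks: (i) $f^\zeta_M$ is a homomorphism of $A$-modules $Z\otimes M\to M$ — a diagram chase combining the hexagon axiom for the half-braiding ($z_{M\otimes A}=(M\otimes z_A)(z_M\otimes A)$), associativity of $\nu_M$, and precisely condition (\ref{cp}) written as $\mu(\zeta\otimes A)=\mu(A\otimes\zeta)z_A$; (ii) $f^\zeta$ is natural in $M$, from naturality of $z$ and the module-map axiom; (iii) $f^\zeta$ satisfies (\ref{cn}), a short computation using $z_{X\otimes M}=(X\otimes z_M)(z_X\otimes M)$. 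On morphisms $\Phi$ is the identity on the underlying morphism in $\cZ(\cC)$: a morphism $(Z,\zeta)\to(Z',\zeta')$ and a morphism $(Z,f^\zeta)\to(Z',f^{\zeta'})$ are each a $\phi\colon Z\to Z'$ in $\cZ(\cC)$, and $\zeta'\phi=\zeta$ is equivalent to $f^{\zeta'}_M(\phi\otimes M)=f^\zeta_M$ for all $M$ (one direction by naturality of half-braidings under $\phi$, the other by evaluating at $M=A$ and composing with $Z\otimes\iota$). Hence $\Phi$ is full and faithful.

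For essential surjectivity — indeed bijectivity on objects — I would use the bijection $\Hom_{\cC_A}(Z\otimes A,N)\simeq\Hom_{\cC}(Z,N)$ given by restriction along $Z\otimes\iota$, together with the fact recorded above that every right $A$-module is a quotient of a free one. Given $(Z,f)\in E\downarrow I$, put $\zeta:=f_A\circ(Z\otimes\iota)\colon Z\to A$, so that $f_A=\mu\circ(\zeta\otimes A)$. Naturality of $f$ at the action map $\nu_M\colon M\otimes A\to M$ (a module map out of the free module $M\otimes A$), after composing with $Z\otimes M\otimes\iota$ and using $\nu_M(M\otimes\iota)=\mathrm{id}$, gives $f_M=\nu_M\circ f_{M\otimes A}\circ(Z\otimes M\otimes\iota)$; and (\ref{cn}), taken with the $\cC$-object equal to $M$ and the module equal to $A$, gives $f_{M\otimes A}=(M\otimes f_A)\circ(z_M\otimes A)$. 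Combining these and inserting the definition of $\zeta$ yields $f_M=\nu_M(M\otimes\zeta)z_M=f^\zeta_M$. In particular the $M=A$ component reads $\mu(\zeta\otimes A)=f_A=f^\zeta_A=\mu(A\otimes\zeta)z_A$, so $\zeta$ satisfies (\ref{cp}) and $\Phi(Z,\zeta)=(Z,f)$. Finally $\Phi$ is monoidal: $\cZ(A)$ is a monoidal subcategory of $F\downarrow A$ (Remark \ref{subc}) with $(Z,\zeta)\otimes(Z',\zeta')=(Z\otimes Z',\mu(\zeta\otimes\zeta'))$, while $E\downarrow I$ has $(Z,f)\otimes(Z',f')=(Z\otimes Z',\overline{ff'})$ with $\overline{ff'}_M=f_M\circ(Z\otimes f'_M)$, and the identity $f^{\mu(\zeta\otimes\zeta')}=\overline{f^\zeta f^{\zeta'}}$ follows from the formula for the composite half-braiding $z|z'$, module associativity, and the module-map property (i); moreover $\Phi$ sends the unit $(I,\iota)$ to $(I,\mathrm{id})$ since $z_I=1$, and commutes with the forgetful functors to $\cZ(\cC)$.

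Therefore $\Phi$ is an isomorphism of monoidal categories over $\cZ(\cC)$, so it carries the terminal object $Z(A)$ of $\cZ(A)$, with its structure map to $A$, onto the terminal object $Z(\cC_A)$ of $E\downarrow I$, with literally equal underlying object of $\cZ(\cC)$; and since in both cases the commutative algebra structure is the one induced on a terminal object of a monoidal comma-type category by Lemmas \ref{ta} and \ref{cc} (as in Propositions \ref{algz} and \ref{cmc}), the monoidality of $\Phi$ forces the two algebra structures to coincide. This gives $Z(\cC_A)=Z(A)$. I expect the main obstacle to be step (i) together with the determination argument of the essential-surjectivity step — that is, identifying $f^\zeta_M=\nu_M(M\otimes\zeta)z_M$ as the unique possibility and verifying it is an $A$-module map; the monoidality verification is routine but tedious.
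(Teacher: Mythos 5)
Your proposal is correct and follows essentially the same route as the paper: your comparison functor $\Phi$ with $f^{\zeta}_M=\nu_M(M\otimes\zeta)z_M$ is exactly the paper's functor $Q$, and your recovery formula $\zeta=f_A\circ(Z\otimes\iota)$ is the paper's $P$, so showing $\Phi$ fully faithful and bijective on objects is just a repackaging of the paper's check that $P$ and $Q$ are mutually inverse, with the same use of freeness of $Z\otimes A$, naturality at $\nu_M$, and condition (\ref{cn}) at $X=M$, $A$. The only cosmetic difference is that you verify monoidality of the comparison a bit more explicitly (where the key identity $\mu(\zeta\otimes\zeta')=\mu(\zeta'\otimes\zeta)z_{Z'}$ comes from (\ref{cp}) plus naturality of the half-braiding), while the paper simply asserts the monoidal equivalence; the conclusion via terminal objects and Lemmas \ref{ta}, \ref{cc} is identical.
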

\begin{proof}
it is enough to show that the comma category $E$$\downarrow$$I$ is monoidally equivalent to the category $\cZ(A)$ from remark \ref{subc}. We start by defining a functor $P:E$$\downarrow$$I\to\cZ(A)$. For $(Z,z)\in\cZ(\cC)$ with a natural collection $f_M:Z\otimes M\to M$ of morphisms of $A$-modules define a morphism $\overline f:Z\to A$ in $\cC$ to be the composite:
$$\xymatrix{Z \ar[r]^{1\iota} & Z\otimes A \ar[r]^{f_A} & A.}$$
Commutativity of the following diagram shows that the morphism $\overline f$ satisfies condition (\ref{cp}):
$$\xymatrix{
ZA \ar[rrr]^{\overline f1} \ar[dddrr]_{11\iota} \ar[dddd]_{z_A} \ar[drr]^{1\iota 1} &&& A^2 \ar[rdd]^\mu\\ 
&& ZA^2 \ar[ur]_{f_A1} \ar[rd]_{1\mu} \\
&&& ZA \ar[r]^{f_A} & A\\
& AZA \ar[drr]_{1f_A} & ZA^2 \ar[l]_{z_A1} \ar[ru]_{1\mu} \ar[rd]^{f_{A^2}}\\
AZ \ar[rrr]_{1\overline f} \ar[ru]^{11\iota} &&& A^2 \ar[ruu]_\mu
}$$
Here the left top square commutes by $A$-linearity of $f_*$, which says that the diagram
$$\xymatrix{ZMA \ar[r]^{f_M1} \ar[d]_{1\nu} & MA \ar[d]^\nu\\ ZM \ar[r]_{f_M} & M  }$$
commutes for all $M\in\cC_A$, while the triangle in the middle bottom of the diagram commutes by the $\cC$-module property of $f_*$, which is equivalent to the commutativity of the diagram
$$\xymatrix{ ZXM \ar[rr]^{f_{XM}} \ar[rd]_{z_X1} && XM\\ & XZM \ar[ru]_{1f_M} }$$ for all $M\in\cC_A$ and $X\in\cC$. We set $P(Z,f) = (Z,\overline f)$.

Now we construct the functor $Q:\cZ(A)\to E$$\downarrow$$I$. For $(Z,z)\in\cZ(\cC)$ with a morphism $g:Z\to A$ in $\cC$ define $\tilde g_M$ as the composite:
$$\xymatrix{ZM \ar[r]^{z_M} & MZ \ar[r]^{1g} & MA \ar[r]^\nu & M  }$$
The condition (\ref{cp}) for $g$ implies that $\tilde g_M$ is a morphism of $A$-modules:
$$\xymatrix{ ZMA \ar[r]^{z_M1} \ar[rd]_{z_{MA}} \ar[dd]_{1\nu} & MZA \ar[r]^{1g1} & MA^2 \ar[rr]^{\nu 1} \ar[dr]^{1\mu} && MA \ar[dd]^\nu\\
& MAZ \ar[u]_{1z_A} \ar[r]^{11g} \ar[d]_{\nu 1} & MA^2 \ar[r]^{1\mu} \ar[d]^{\nu 1} & MA \ar[dr]^\nu \\
ZM \ar[r]_{z_M} & MZ\ar[r]_{1g} & MA \ar[rr]_\nu && M
}$$
Obviously the collection $\tilde g_M$ is natural in $M\in\cC_A$. The $\cC$-module property of $\tilde g$ is almost self-evident:
$$\xymatrix{  
ZXM \ar[dr]^{z_{XM}} \ar[dd]_{z_X1} \\
& XMZ \ar[r]^{11g} & XMA \ar[r]^{1\nu} & XM\\
XZM \ar[ur]_{1z_M}
}$$ We set $Q(Z,g) = (Z,\tilde g)$. 

It is easy to see that the constructed functors are quasi-inverse to each other. Indeed, it follows from commutativity of the diagram:
$$\xymatrix{
& Z \ar[dl]_{1\iota} \ar[d]^{\iota 1} \ar[r]^f & A \ar[d]_{\iota 1} \ar[dr]^1\\
ZA \ar[r]_{z_A} & AZ \ar[r]_{1f} & A^2 \ar[r]_\mu & A
}$$
that $\overline{\tilde f} = f$, i.e. the composition $PQ$ is the identity. 

Similarly the diagram
$$\xymatrix{
& MZ \ar[rr]^{1f} \ar[dr]^{11\iota} && MA \ar[dr]^\nu\\
ZM \ar[ur]^{z_M} \ar[rd]^{11\iota} \ar@/_40pt/[drrr]_1 && MZA \ar[ur]^{1f_A} && M\\
& ZMA \ar@/_20pt/[uurr]_{f_{MA}} \ar[ur]^{z_M1} \ar[rr]_{1\nu} && ZM \ar[ur]_{f_M}
}$$
implies that $\tilde{\overline f} = f$, i.e. the composition $QP$ is the identity.
\end{proof}

We immediately have the following.
\begin{corr}\label{}
The full centre of an algebra is Morita invariant.
\end{corr}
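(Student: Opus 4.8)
The plan is to deduce the corollary directly from the two theorems in section \ref{mi} together with the definition of Morita equivalence. Recall that two algebras $A,B$ in a monoidal category $\cC$ are, by definition, Morita equivalent precisely when their categories of right modules $\cC_A$ and $\cC_B$ are equivalent as $\cC$-module categories. So suppose $A$ and $B$ are Morita equivalent, and fix a $\cC$-module equivalence $\Phi:\cC_A\to\cC_B$.

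The key observation is that the centre $Z(\cM)$ of a $\cC$-module category $\cM$, as defined above, depends only on the $\cC$-module category $\cM$ up to $\cC$-module equivalence. Indeed $Z(\cM)$ is defined as the terminal object of the comma category $E\!\downarrow\! I$ attached to the monoidal functor $E:\cZ(\cC)\to\End_\cC(\cM)$; a $\cC$-module equivalence $\Phi:\cM\to\cN$ induces a monoidal equivalence $\End_\cC(\cM)\to\End_\cC(\cN)$ by conjugation $G\mapsto \Phi G\Phi^{-1}$, and this conjugation is compatible with the functors $E$ on both sides (the functor $M\mapsto Z*M$ is carried, via the $\cC$-module structure isomorphisms of $\Phi$, to the functor $N\mapsto Z*N$ up to coherent natural isomorphism). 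Hence the two comma categories are equivalent, and since terminal objects are preserved by equivalences of categories, $Z(\cM)\simeq Z(\cN)$ as objects of $\cZ(\cC)$ — and in fact as commutative algebras in $\cZ(\cC)$, since by Proposition \ref{cmc} the algebra structure is itself determined by the universal property.

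Now I would simply chain this together. Given the Morita equivalence $\cC_A\simeq\cC_B$ of $\cC$-module categories, the previous paragraph gives $Z(\cC_A)\simeq Z(\cC_B)$ as commutative algebras in $\cZ(\cC)$. By the last theorem of section \ref{mi}, $Z(\cC_A)=Z(A)$ and $Z(\cC_B)=Z(B)$. Therefore $Z(A)\simeq Z(B)$ as commutative algebras in $\cZ(\cC)$, which is exactly the assertion that the full centre is a Morita invariant. The converse direction (that isomorphic full centres need not imply Morita equivalence) is not claimed, so nothing more is needed.

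The only genuinely non-routine point is the invariance of $Z(\cM)$ under $\cC$-module equivalences, i.e. verifying that conjugation by $\Phi$ intertwines the two functors $E$ up to coherent monoidal natural isomorphism, so that the induced equivalence of comma categories identifies the respective terminal objects along with their algebra structures. This is a diagram-chase using the $\cC$-module structure isomorphisms $\Phi_{X,M}:\Phi(X*M)\to X*\Phi(M)$ and their coherence (the hexagon for $\cC$-module functors quoted in the text); it is entirely formal but does require some care in bookkeeping the half-braidings. Everything else is a direct appeal to the two theorems already proved.
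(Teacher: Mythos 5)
Your proposal is correct and follows exactly the route the paper intends: the corollary is stated as an immediate consequence of the theorem $Z(\cC_A)=Z(A)$, the implicit point being precisely the one you spell out, namely that the centre $Z(\cM)$ of a $\cC$-module category depends only on $\cM$ up to $\cC$-module equivalence (via the induced monoidal equivalence of $\End_\cC(\cM)$'s intertwining the functors $E$, hence an equivalence of the relevant comma categories identifying the terminal objects together with their canonical commutative algebra structures). Your write-up simply makes explicit the step the paper leaves as ``immediate,'' so there is nothing further to add.
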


\section{Braided case}
Recall (from \cite{js}) that for a braided monoidal $\cC$ the tensor product functor 
$$T:\cC\times\cC\to\cC,\quad (X,Y)\mapsto X\otimes Y$$ is monoidal, with monoidal structure 
\begin{equation}\label{monstr}
\xymatrix{
T((X,Y)\otimes(Z,W))\ar@{=}[d] \ar[rr]^{T_{(X,Y),(Z,W)}} && T(X,Y)\otimes T(Z,W) \ar@{=}[d] \\
XZYW \ar[rr]^{1c_{Z,Y}1} && XYZW
}
\end{equation}
It turns out that this functor can be lifted to a monoidal functor $\cC\times\cC\to\cZ(\cC)$.
To construct this functor note that the braiding $c_{X,Y}:X\otimes Y\to Y\otimes X$ allows us to define braided monoidal functors
$$\iota_+:\cC\to\cZ(\cC),\quad X\mapsto (X,c_{X,-}),$$
$$\iota_-:\cC\to\cZ(\cC),\quad X\mapsto (X,c_{-,X}^{-1}).$$
These functors split the forgetful functor $F$:
$$F\iota_\pm\simeq Id_\cC,$$ with the natural isomorphism being the identity.
\newline
We can combine the functors $\iota_\pm$ into one
$$\cC\times\cC\to\cZ(\cC),\quad (X,Y)\mapsto\iota_+(X)\otimes\iota_-(Y).$$
The following lemma (essentially contained in \cite{mu}) says that this functor is monoidal, with the monoidal structure (\ref{monstr}).
\begin{lem}
The following diagram of monoidal functors commutes
$$\xymatrix{\cC\times\cC \ar[dr]_T \ar[rr] && \cZ(\cC) \ar[dl]^F \\ & \cC}$$
\end{lem}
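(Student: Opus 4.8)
The plan is to verify that the functor $(X,Y)\mapsto\iota_+(X)\otimes\iota_-(Y)$, equipped with the monoidal structure (\ref{monstr}), is a monoidal functor, and that composing it with the forgetful functor $F$ gives back the monoidal functor $T$ with exactly its stated monoidal structure. Since $F$ is monoidal with identity monoidal structure and since $F\iota_\pm\simeq Id_\cC$ with identity natural isomorphism, the underlying functor of the composite $F\circ(\iota_+(-)\otimes\iota_-(-))$ is literally $(X,Y)\mapsto X\otimes Y$, i.e. $T$; so the only content of commutativity of the triangle is that the monoidal structure on the composite agrees with $T_{(X,Y),(Z,W)}=1\otimes c_{Z,Y}\otimes 1$. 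Thus the real work is computing the monoidal structure isomorphism of $(X,Y)\mapsto\iota_+(X)\otimes\iota_-(Y)$ and then applying $F$ to it.

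First I would recall that $\iota_+$ and $\iota_-$ are braided monoidal functors, so in particular they are monoidal; their monoidal structure isomorphisms $\iota_\pm(X)\otimes\iota_\pm(Y)\to\iota_\pm(X\otimes Y)$ are identities on underlying objects (the half-braiding on $X\otimes Y$ built from $c_{X,-}$ and $c_{Y,-}$ using the rule $z|w$ in $\cZ(\cC)$ is the half-braiding $c_{X\otimes Y,-}$, by the hexagon axiom for the braiding). Next, the tensor product functor $\cZ(\cC)\times\cZ(\cC)\to\cZ(\cC)$ is strictly monoidal in the relevant sense, so the monoidal structure on $(X,Y)\mapsto\iota_+(X)\otimes\iota_-(Y)$ is obtained by combining the monoidal structures of $\iota_+$ and $\iota_-$ with the associativity/commutativity constraints needed to rearrange $(\iota_+X\otimes\iota_-Y)\otimes(\iota_+Z\otimes\iota_-W)$ into $\iota_+(XZ)\otimes\iota_-(YW)$. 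Concretely, one must pass from $\iota_+X\otimes\iota_-Y\otimes\iota_+Z\otimes\iota_-W$ to $\iota_+X\otimes\iota_+Z\otimes\iota_-Y\otimes\iota_-W$, and the only way to do this within $\cZ(\cC)$ is via the braiding $c_{(\cdot),(\cdot)}=x_Y$ of $\cZ(\cC)$ applied to the pair $(\iota_-Y,\iota_+Z)$. By the definition of the braiding in $\cZ(\cC)$ and of $\iota_-$, this braiding morphism has underlying morphism the half-braiding of $\iota_-Y$ evaluated at $Z$, namely $c_{Z,Y}^{-1}$ — wait, more carefully: the braiding of $\cZ(\cC)$ on $((\iota_-Y),(\iota_+Z))$ is $(c_{-,Y}^{-1})_Z\colon Y\otimes Z\to Z\otimes Y$, which is $c_{Z,Y}$; this is exactly the middle factor appearing in (\ref{monstr}).

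So the key steps, in order, are: (1) write down the monoidal structure isomorphism of the composite functor $(X,Y)\mapsto\iota_+(X)\otimes\iota_-(Y)$ explicitly as a pasting of the monoidal structures of $\iota_+$, $\iota_-$, and the tensor bifunctor on $\cZ(\cC)$; (2) identify the one nontrivial factor in this pasting as the braiding of $\cZ(\cC)$ evaluated at $(\iota_-Y,\iota_+Z)$, and compute its underlying morphism in $\cC$ to be $c_{Z,Y}$; (3) apply the forgetful functor $F$, using that $F$ has identity monoidal structure and $F\iota_\pm=Id$, to conclude that the composite's monoidal structure maps to $1\otimes c_{Z,Y}\otimes 1$, which is precisely $T_{(X,Y),(Z,W)}$; (4) check that the coherence hexagons/pentagons for this monoidal structure are automatically inherited, which follows from functoriality of $F$ plus the fact that they already hold upstairs in $\cZ(\cC)$ (or cite \cite{mu}). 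The main obstacle is bookkeeping in step (2): one must be scrupulous about which half-braiding is in play and in which direction the braiding of $\cZ(\cC)$ runs, since $\iota_+$ uses $c_{X,-}$ while $\iota_-$ uses $c_{-,X}^{-1}$, and it is easy to produce $c_{Z,Y}^{-1}$ or $c_{Y,Z}$ instead of the required $c_{Z,Y}$; carefully unwinding the definition $c_{(X,x),(Y,y)}=x_Y$ of the braiding in $\cZ(\cC)$ on the pair $(\iota_+Z,\iota_-Y)$ versus $(\iota_-Y,\iota_+Z)$ resolves which one occurs. Everything else is a routine but slightly lengthy diagram chase, which is why the lemma is credited as "essentially contained in \cite{mu}."
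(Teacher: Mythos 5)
Your route is genuinely different from the paper's and, modulo one slip, it works. The paper proves the lemma by taking the structure map $1c_{Z,Y}1$ of (\ref{monstr}) and verifying by an explicit diagram chase (naturality of the braiding plus the hexagon) that its middle factor intertwines the half-braidings of $\iota_+(Z)\otimes\iota_-(Y)$ and $\iota_-(Y)\otimes\iota_+(Z)$, i.e. is a morphism in $\cZ(\cC)$; monoidality of $\iota_\pm$ (with identity structure maps, as you note via the hexagon axioms) then reduces the whole statement to that one check. You instead assemble the structure map of the lift from the braiding of $\cZ(\cC)$ itself, whose components are morphisms in $\cZ(\cC)$ by construction (\cite{js}), so compatibility with half-braidings comes for free and only the underlying morphism in $\cC$ needs computing. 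That is a legitimate shortcut: it replaces the paper's explicit hexagon diagram by the citation that $\cZ(\cC)$ is braided with $c_{(X,x),(Y,y)}=x_Y$.

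However, at exactly the point you flag as the crux your bookkeeping goes wrong: the braiding of $\cZ(\cC)$ at the pair $(\iota_-(Y),\iota_+(Z))$ is $(c_{-,Y}^{-1})_Z=c_{Z,Y}^{-1}\colon Y\otimes Z\to Z\otimes Y$, which is not $c_{Z,Y}$ (nor equal to it unless $\cC$ is symmetric). The discrepancy is precisely an inverse, caused by a direction mismatch: you build the structure map in the lax direction $\iota_+(X)\otimes\iota_-(Y)\otimes\iota_+(Z)\otimes\iota_-(W)\to\iota_+(XZ)\otimes\iota_-(YW)$, where the swap indeed has underlying map $c_{Z,Y}^{-1}$, whereas (\ref{monstr}) is written in the opposite direction and its middle factor $c_{Z,Y}$ appears only after inverting. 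The clean fix is to use the braiding at the pair $(\iota_+(Z),\iota_-(Y))$: $c_{\iota_+(Z),\iota_-(Y)}=(c_{Z,-})_Y=c_{Z,Y}\colon\iota_+(Z)\otimes\iota_-(Y)\to\iota_-(Y)\otimes\iota_+(Z)$, a morphism in $\cZ(\cC)$ whose underlying map is exactly the middle factor of (\ref{monstr}) in the right direction (this component-of-the-braiding statement is precisely what the paper's diagram verifies by hand). With that correction, together with the observations you already make that the structure maps of $\iota_\pm$ and of $F$ are identities, your argument is complete and in fact shorter than the paper's.
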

\begin{proof}
We need to show that $1c_{Z,Y}1$ gives rise to a morphism $$\iota_+(XZ)\otimes\iota_-(YW)\to \iota_+(X)\otimes\iota_-(Y)\otimes\iota_+(Z)\otimes\iota_-(W)$$ in $\cZ(\cC)$. Monoidality of $\iota_\pm$ reduces it to the statement that $c_{Z,Y}$ is a morphism $\iota_+(Z)\otimes\iota_-(W)\to\iota_-(W)\otimes\iota_+(Z)$ in $\cZ(\cC)$, which follows from commutative diagram (here $U$ is an object of $\cC$):
$$\xymatrix{
ZWU \ar[rr]^{(c_{Z,-}|c_{-,W}^{-1})_U} \ar[ddd]_{c_{Z,W}1} \ar[rdd]_{c_{Z,WU}} && UZW \ar[ddd]^{1c_{Z,W}} \\
& ZUW \ar[lu]_{1c_{U,W}} \ar[ur]_{c_{Z,U}1} \ar[rdd]^{c_{Z,UW}}\\
& WUZ \\
WZU \ar[ru]^{1c_{Z,U}} \ar[rr]_{(c_{-,W}^{-1}|c_{Z,-})_U} && UWZ \ar[lu]^{c_{U,W}1}
}$$
\end{proof}

Let $\cM$ be a module category over a braided monoidal $\cC$. Following \cite{os} define two functors ({\em $\alpha$-inductions}) 
$\alpha_\pm:\cC\to\End_\cC(\cM)$ by $\alpha_\pm(X)(M) = X*M$ with $\cC$-module structures:
$$
\xymatrix{ \alpha_+(X)(Y*M) \ar[rr]^{\alpha_+(X)_{Y,M}} \ar@{=}[d] && Y*\alpha_+(X)(M) \ar@{=}[d] \\  X*(Y*M) \ar[d]_{a_{X,Y,M}} && Y*(X*M) \ar[d]^{a_{Y,X,M}} \\ (X\otimes Y)*M \ar[rr]_{c_{X,Y}*1} && (Y\otimes X)*M}$$
$$\xymatrix{ \alpha_-(X)(Y*M) \ar[rr]^{\alpha_-(X)_{Y,M}} \ar@{=}[d] && Y*\alpha_-(X)(M) \ar@{=}[d] \\  X*(Y*M) \ar[d]_{a_{X,Y,M}} && Y*(X*M) \ar[d]^{a_{Y,X,M}} \\ (X\otimes Y)*M \ar[rr]_{c^{-1}_{Y,X}*1} && (Y\otimes X)*M }$$
\begin{lem}\label{ialp}
The following diagram of monoidal functors commutes
$$\xymatrix{\cC\times\cC \ar[dr]_{\alpha_+\times\alpha_-} \ar[rr]^{\iota_+\times\iota_-} && \cZ(\cC) \ar[dl]^E \\ & \End_\cC(\cM)}$$
\end{lem}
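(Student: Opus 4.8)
The plan is to unpack both composites of monoidal functors in the diagram and check that they agree, both on underlying functors and on the monoidal structure maps. First I would observe that on objects there is nothing to prove: following the top and right, the pair $(X,Y)$ goes to $\iota_+(X)\otimes\iota_-(Y)$ in $\cZ(\cC)$ and then $E$ sends this to the endofunctor $M\mapsto (X\otimes Y)*M$; following the bottom and the composition of $\alpha_+(X)$ with $\alpha_-(Y)$ gives $M\mapsto X*(Y*M)$, which is identified with $(X\otimes Y)*M$ via the module associator $a_{X,Y,M}$. So the content is entirely in comparing the $\cC$-module structures on these endofunctors and then the monoidal (i.e. composition) structure maps, and checking that the identification by $a$ is compatible with both.

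The key step is therefore the following: the $\cC$-module structure that $E$ puts on $\iota_+(X)\otimes\iota_-(Y)$ is, by the construction of $E$ given just before the definition of $Z(\cM)$, the composite $a^{-1}\circ((z|w)_U * 1)\circ a$ of the module associators with the half-braiding of $\iota_+(X)\otimes\iota_-(Y)$ acting on $U$; by definition of the tensor product in $\cZ(\cC)$ this half-braiding is $(c_{X,-}\,|\,c_{-,Y}^{-1})_U$, which unwinds (using the explicit formula for $z|w$ in the monoidal centre section) to $a_{U,X,Y}\circ(c_{X,U}*1)\circ a_{X,U,Y}^{-1}\circ(1*c_{U,Y}^{-1})\circ a_{X,Y,U}^{-1}$ up to the associator of $\cC$. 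On the other side, the $\cC$-module structure on the composite $\alpha_+(X)\circ\alpha_-(Y)$ is obtained by composing the $\cC$-module structures of $\alpha_+(X)$ and of $\alpha_-(Y)$ as displayed, i.e. it involves $c_{X,-}*1$ from $\alpha_+$ and $c_{-,Y}^{-1}*1$ from $\alpha_-$, interleaved with the appropriate module associators. I would then write out the large pentagon/hexagon diagram relating these two expressions, in which every cell is an instance of either the module-category pentagon axiom, naturality of $a$, or the hexagon axioms for the braiding $c$ in $\cC$; this is the analogue of the commutative square in the proof of the previous lemma (the one with $F$ in place of $E$), now decorated by the extra $*M$.

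Next I would check the monoidal structure maps. The monoidal structure on $\iota_+\times\iota_-$ followed by $E$ is, by the previous lemma (the one asserting $F\circ(\iota_+\times\iota_-)=T$ as monoidal functors) together with the definition of $E$ as a strict monoidal functor on composition, governed by the map $1c_{Z,Y}1$ transported through $E$; while the monoidal structure on $\alpha_+\times\alpha_-$ (composition of endofunctors) similarly produces $1c_{Z,Y}1$ acting after the $*M$. Both are identified with the same morphism once we conjugate by the module associators, so compatibility reduces again to repeated use of the module pentagon and the naturality of $a$. Since $F$ is faithful and $E$ factors the relevant data through structures already shown coherent for $F$, much of this can in fact be deduced formally: a morphism in $\End_\cC(\cM)$ is determined by its components, and the components here are exactly the images under $-*M$ of the corresponding morphisms in $\cC$ (resp. $\cZ(\cC)$) appearing in the previous lemma.

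The main obstacle I expect is purely bookkeeping: keeping track of the module associators $a_{X,Y,M}$ and their inverses while simultaneously carrying the braidings, so that one really is comparing the two halves of the same hexagon rather than two superficially different expressions. In particular one must be careful that $\alpha_-$ uses $c^{-1}_{Y,X}$ (not $c_{X,Y}$) in its module structure, matching the half-braiding $c_{-,Y}^{-1}$ of $\iota_-(Y)$; getting this variance right is where a naive attempt would go wrong. Once the diagram is drawn with the module associators made explicit, every cell is one of the three axioms listed above, so there is no genuine difficulty beyond diagram management, and I would present it as a single large commuting diagram in the style already used in the paper, remarking that it is ``essentially contained in \cite{os,mu}''.
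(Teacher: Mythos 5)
Your argument is correct, but it is considerably heavier than what is needed, and it is organized differently from the paper. The paper's proof is essentially one line: since $E$ was already shown (just before the definition of $Z(\cM)$) to be a monoidal functor $\cZ(\cC)\to\End_\cC(\cM)$, carrying $\otimes$ in $\cZ(\cC)$ to composition of endofunctors, the only thing left to observe is that $E\circ\iota_\pm=\alpha_\pm$; and this is immediate, because the $\cC$-module structure $E$ assigns to $\iota_+(X)=(X,c_{X,-})$ is exactly the composite $a^{-1}_{Y,X,M}\circ(c_{X,Y}*M)\circ a_{X,Y,M}$ defining $\alpha_+(X)$, and likewise $\iota_-(Y)=(Y,c^{-1}_{-,Y})$ reproduces the $c^{-1}$-twisted structure defining $\alpha_-(Y)$. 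Monoidality of $E$ then identifies $E(\iota_+(X)\otimes\iota_-(Y))$ with $E\iota_+(X)\circ E\iota_-(Y)=\alpha_+(X)\circ\alpha_-(Y)$, compatibly with the structure maps, which is the whole lemma. What you do instead is unwind the half-braiding $(c_{X,-}\,|\,c^{-1}_{-,Y})_U$ explicitly and compare it cell-by-cell with the interleaved module structures of $\alpha_+(X)$ and $\alpha_-(Y)$; this amounts to re-proving monoidality of $E$ on these particular objects, so it works, but the large pentagon/naturality diagram you describe is redundant given what is already in the paper, and the hexagon axioms for $c$ are in fact never needed (the braiding enters only through the half-braidings, which are taken as given, so only the definition of $z|w$, functoriality of $*$, naturality of $a$ and the module pentagon occur). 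Two small cautions in your write-up: your unwound formula for $(c_{X,-}\,|\,c^{-1}_{-,Y})_U$ mixes $*$ with $\otimes$ and does not quite match the associator pattern in the paper's definition of $z|w$ (harmless, since you hedge ``up to the associator'', but it should be fixed in a final version), and you should not call $E$ strict monoidal unless the module associator $a$ is assumed trivial — the comparison $E(Z\otimes W)(M)=(Z\otimes W)*M$ versus $Z*(W*M)$ is exactly the (generally nontrivial) monoidal structure of $E$.
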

\begin{proof}
this follows from the fact that the composite $E\circ\iota_\pm$ coincides with $\alpha_\pm$.
\end{proof}

\begin{prop}
Let $\cM$ be a module category over a braided monoidal category $\cC$. Then we have isomorphisms of Hom-spaces:
$$\cZ(\cC)(\iota_+(X)\otimes\iota_-(Y),Z(\cM))\simeq\End_\cC(\cM)(\alpha_+(X)\circ\alpha_-(Y),I).$$
\end{prop}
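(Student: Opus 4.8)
The plan is to reduce the statement to the universal property defining $Z(\cM)$, combined with the monoidality of $E:\cZ(\cC)\to\End_\cC(\cM)$ and Lemma \ref{ialp}.

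First I would record the following reformulation of the terminality of $Z(\cM)$. Let $p:E(Z(\cM))\to I$ be the structure morphism of $Z(\cM)$ as an object of the comma category $E$$\downarrow$$I$. Since $(Z(\cM),p)$ is terminal there, for every object $W\in\cZ(\cC)$ the assignment
$$\cZ(\cC)(W,Z(\cM))\longrightarrow\End_\cC(\cM)(E(W),I),\qquad \phi\longmapsto p\circ E(\phi)$$
is a bijection: an element of the right-hand side is exactly an object $(W,h)$ of $E$$\downarrow$$I$ with underlying object $W$, and terminality gives a unique $\phi:W\to Z(\cM)$ in $\cZ(\cC)$ with $p\circ E(\phi)=h$; conversely every $\phi$ arises this way from $h:=p\circ E(\phi)$. (This bijection is natural in $W$, though we only need it for a single object.) This is the same piece of abstract nonsense about terminal objects in comma categories already used in the proof of Proposition \ref{algz} and in Section \ref{mi}.

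Next I would specialise to $W=\iota_+(X)\otimes\iota_-(Y)$. Because $E$ is a monoidal functor and the tensor product of $\End_\cC(\cM)$ is composition of functors, $E(\iota_+(X)\otimes\iota_-(Y))$ is canonically identified with $E(\iota_+(X))\circ E(\iota_-(Y))$. By Lemma \ref{ialp}, $E\circ\iota_+=\alpha_+$ and $E\circ\iota_-=\alpha_-$ as monoidal functors, so this object is $\alpha_+(X)\circ\alpha_-(Y)$. Substituting into the bijection above yields
$$\cZ(\cC)(\iota_+(X)\otimes\iota_-(Y),Z(\cM))\simeq\End_\cC(\cM)(\alpha_+(X)\circ\alpha_-(Y),I),$$
which is the claim.

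The only genuinely non-formal point — and the step I would be most careful about — is verifying that the monoidal-structure isomorphism of $E$, composed with the identifications $E\iota_\pm=\alpha_\pm$, really carries $E(\iota_+(X)\otimes\iota_-(Y))$ onto $\alpha_+(X)\circ\alpha_-(Y)$ coherently; that is, that Lemma \ref{ialp} is an equality of monoidal functors rather than a mere pointwise coincidence. Given that the $\cC$-module structures on $\alpha_\pm(X)$ and on $E(\iota_\pm(X))$ are built from precisely the same associators and half-braidings (the half-braiding of $\iota_\pm(X)$ being $c_{X,-}$, resp.\ $c_{-,X}^{-1}$), this is a direct unwinding of the definitions of $E$ and of $\alpha_\pm$. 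Everything else in the argument is formal.
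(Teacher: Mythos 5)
Your argument is correct and is essentially the paper's own proof: the paper likewise obtains $\cZ(\cC)(\iota_+(X)\otimes\iota_-(Y),Z(\cM))\simeq\End_\cC(\cM)(E(\iota_+(X)\otimes\iota_-(Y)),I)$ from the universal (terminal) property of $Z(\cM)$ in the comma category $E$$\downarrow$$I$, and then identifies $E(\iota_+(X)\otimes\iota_-(Y))$ with $\alpha_+(X)\circ\alpha_-(Y)$ via Lemma \ref{ialp}. Your expansion of the terminality bijection and the remark about monoidality of the identification $E\iota_\pm=\alpha_\pm$ just make explicit what the paper leaves implicit.
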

\begin{proof}
By the universal property of $Z(\cM)$, $\cZ(\cC)(\iota_+(X)\otimes\iota_-(Y),Z(\cM))$ coincides with $\End_\cC(\cM)(E(\iota_+(X)\otimes\iota_-(Y)),I)$, which coincides with $\End_\cC(\cM)(\alpha_+(X)\circ\alpha_-(Y),I)$ by lemma \ref{ialp}.
\end{proof}
This formula first appeared in the context of modular categories (see \cite{ffrs}), where it allows effective computation of the full centre.

\section{Modular case}\label{moc}

From now on we fix a ground field $k$. In this section all categories will be $k$-linear and finite (all hom-sets are finite dimensional vector spaces over $k$, composition is $k$-bilinear). The tensor product functor is bi-linear (linear in each argument). Slightly abusing the term we will call such categories {\em tensor}. All functors will be assumed $k$-linear (effect on morphisms is linear over $k$). 

An object $X^\sve$ is (left) {\em dual} to $X\in\cC$ if there exist morphisms $coev:I\to X\otimes X^\sve,\
ev:X^\sve\otimes X\to I$ such that the compositions
\begin{equation}\label{dua1}
\xymatrix{ X \ar[r]^(.3){coev X} & X\otimes X^\sve\otimes X \ar[r]^(.7){Xev} & X}
\end{equation}
\begin{equation}\label{dua2}
\xymatrix{ X^\sve \ar[r]^(.3){Xcoev} & X^\sve\otimes X\otimes X^\sve \ar[r]^(.7){evX} & X^\sve}
\end{equation}
are equal to the identity morphisms. A monoidal category is (left) {\em rigid} if all its objects have (left) duals. 

A rigid braided monoidal category $\cC$ is {\em ribbon} (or {\em tortile} \cite{sh,tu}) if it is equipped with a natural collection of isomorphisms
$\theta_X:X\to X$, satisfying the coherence axiom, which says that the diagram
\begin{equation}\label{ribaxiom}
\xymatrix{X\otimes Y \ar[r]^{\theta_{X\otimes Y}} \ar[d]_{c_{X,Y}} & X\otimes Y  \\ Y\otimes X
\ar[r]^{\theta_Y\otimes\theta_X} & X\otimes Y \ar[u]_{c_{Y,X}} }
\end{equation}
commutes for all $X,Y\in\cC$, and such that $\theta_{X^\sve} = \theta_X^\sve$ (the {\em self-duality axiom}).

Define the {\em trace} $tr(f)$ of an endomorphism $f:X\to X$ in a ribbon category $\cC$ as the
composition
$$\xymatrix{1 \ar[r]^(.4){coev_X} & X\otimes X^\sve \ar[r]^{\theta_X f\otimes X^\sve} & X\otimes X^\sve \ar[r]^{c_{X,X^\sve}} &
X^\sve\otimes X \ar[r]^(.7){ev_X} & 1}$$ The trace has the following properties (see \cite{tu} for
the proof):
$$tr(fg) = tr(gf), \quad f:X\to Y,\ g:Y\to X,$$ $$tr(f\otimes g) = tr(f)tr(g),\quad f:X\to X,\ g:Y\to Y,$$
$$tr(\lambda) = \lambda,\quad \lambda:1\to 1,$$ See \cite{js,tu} for details and proofs. 
\newline
There is a weaker notion (which does not require the presence of braiding) of so-called {\em spherical} monoidal category, where traces exist and have the right properties (see \cite{bw}). 

Recall that the {\em Deligne tensor product} $\cC\boxtimes\caD$ of two abelian $k$-linear categories is the abelian envelope of the tensor product of $\cC$ and $\caD$ as $k$-linear categories (or $\Vect$-enriched categories), i.e. category with objects being pairs $(X,Y),\ X\in\cC, Y\in\caD$ and hom spaces (from $(X,Y)$ to $(Z,W)$)) $\cC(X,Z)\otimes\caD(Y,W)$.

We call a braided monoidal category $\cC$ {\em non-degenerate} if the functor $\cC\boxtimes\cC\to\cZ(\cC)$ is an equivalence. 

Following \cite{tu} we call a ribbon (spherical) category $\cC$ {\em pure} if the bilinear pairing
$$\cC(X,Y)\otimes\cC(Y,X)\to k,\quad f\otimes g\mapsto tr(fg)$$ is non-degenerate for any $X,Y\in\cC$. Denote by $coev\in \cC(X,Y)\otimes\cC(Y,X)$
the canonical element of this pairing, which exists due to finite dimensiality of $\cC(X,Y)$. 

We will need the notion of ($k$-linear or $\Vect$-enriched) {\em coend} of a functor $S:\cC^{op}\times\cC\to\caD$ which we denote $\int^YS(Y,Y)$ (see \cite{mc,dk} for details). 
\begin{prop}\label{ra}
Let $\cC$ be a pure ribbon (spherical) category. Then the tensor product functor $T:\cC\boxtimes\cC\to\cC$ has the right adjoint:
$$R:\cC\to\cC\boxtimes\cC,\quad R(X) = \int^Y(X\otimes Y^\sve)\boxtimes Y =(X\boxtimes I)\otimes\tilde R,$$
where $\tilde R = \int^Y Y^\sve\boxtimes Y$.
\end{prop}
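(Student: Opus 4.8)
The plan is to construct the adjunction isomorphism
$$\cC(T(X\boxtimes Y),W)\ \simeq\ (\cC\boxtimes\cC)\bigl(X\boxtimes Y,\ R(W)\bigr)$$
natural in $X,Y\in\cC$ and $W\in\cC$, and then invoke the universal property of the coend. First I would record the guiding computation: for a fixed $W$ we have
$$\cC(X\otimes Y,\ W)\ \simeq\ \cC(X,\ W\otimes Y^\sve),$$
which is the standard duality adjunction coming from the rigidity of $\cC$ (using $coev$ and $ev$ for $Y$); since $\cC$ is purely ribbon, left and right duals coincide up to the canonical isomorphisms supplied by $\theta$, so there is no ambiguity about which dual to use. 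Unpacking the $\Vect$-enriched $\Hom$ out of the Deligne product, $(\cC\boxtimes\cC)(X\boxtimes Y, R(W))$ is by definition of $R(W)=\int^Z (W\otimes Z^\sve)\boxtimes Z$ computed as
$$\int_Z\ \cC\bigl(X,\ W\otimes Z^\sve\bigr)\otimes\cC\bigl(Y,\ Z\bigr),$$
where the end/coend swap is the defining universal property of the coend paired against a $\Hom$ functor. So the whole statement reduces to exhibiting a natural isomorphism
$$\cC(X\otimes Y,\ W)\ \simeq\ \int_Z\ \cC(X,\ W\otimes Z^\sve)\otimes\cC(Y,Z).$$

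The key step is then a \emph{co-Yoneda} (density) argument: the functor $Z\mapsto \cC(Y,Z)$ together with the purity pairing $\cC(Y,Z)\otimes\cC(Z,Y)\to k$ lets one resolve the identity, i.e. the canonical element $coev\in\cC(Y,Z)\otimes\cC(Z,Y)$ of the non-degenerate pairing realises $Y$ as the coend $\int^Z \cC(Z,Y)\cdot Z$ in the enriched sense. Concretely, one builds the map from right to left by sending an element represented (in the $Z$-summand) by $f\otimes g$ with $f:X\to W\otimes Z^\sve$, $g:Y\to Z$ to the composite
$$\xymatrix{ X\otimes Y \ar[r]^(.4){f\otimes g} & (W\otimes Z^\sve)\otimes Z \ar[r]^(.65){W\otimes ev_Z} & W },$$
and checks this is well defined on the end (i.e. dinatural in $Z$) precisely because $ev$ is dinatural; the inverse map uses $coev_Y$ to split $Y$ and the purity pairing to produce the $Z$-component, with $Z$ ranging over a set of representatives. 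One verifies the two composites are identities using the rigidity zig-zag identities (\ref{dua1}),(\ref{dua2}) for $Y$ together with non-degeneracy of the purity pairing — this is where \emph{pure} is essential; without it the coend $\tilde R=\int^Z Z^\sve\boxtimes Z$ need not represent the adjoint, and the ``$=(X\boxtimes I)\otimes\tilde R$'' rewriting (which is just pulling the constant $X$ out of the coend, legitimate since $T$ and $\otimes$ are $k$-linear in each variable) would be vacuous. Finally I would note naturality in $W$ is automatic from functoriality of all the structure maps involved.

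The main obstacle I expect is \emph{existence} of the coend $\int^Z Z^\sve\boxtimes Z$ itself, i.e. that the defining colimit actually exists in $\cC\boxtimes\cC$, since $\cC$ need only be finite rather than cocomplete. Here one uses finiteness crucially: the coend can be computed as a finite colimit over a skeleton built from the finitely many simple objects, or equivalently presented as the cokernel of a map between finite biproducts $\bigoplus_{S,S'} \cC(S,S')\cdot (S'^\sve\boxtimes S)\rightrightarrows \bigoplus_S S^\sve\boxtimes S$; finiteness and abelian-ness of $\cC\boxtimes\cC$ guarantee this cokernel exists. Once existence is granted, the adjunction isomorphism is the formal co-Yoneda manipulation above, and the mild bookkeeping of keeping track of the Deligne-product $\Hom$ and of the ribbon identifications of left versus right duals is routine; I would relegate that bookkeeping to a remark rather than spell it out.
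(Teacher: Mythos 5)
Your two structure maps are the same ones the paper uses: composing $f\otimes g$ with $W\otimes ev_Z$ is exactly the counit $TR\to Id$ built from the dinatural family of evaluations, and your inverse, manufactured from the canonical element of the trace pairing, is the paper's unit; the difference is only packaging, since the paper constructs the unit and counit directly (and asserts dinaturality and the adjunction axioms) rather than exhibiting a hom-set bijection.

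There is, however, a genuine gap in the step that carries your whole reduction. The universal property of a coend computes morphisms \emph{out} of it: $\cC\boxtimes\cC\bigl(\int^Z(W\otimes Z^\sve)\boxtimes Z,\,M\bigr)\simeq\int_Z\cC\boxtimes\cC\bigl((W\otimes Z^\sve)\boxtimes Z,\,M\bigr)$. It says nothing about morphisms \emph{into} it, so identifying $\cC\boxtimes\cC(X\boxtimes Y,R(W))$ with the end $\int_Z\cC(X,W\otimes Z^\sve)\otimes\cC(Y,Z)$ is not ``the defining universal property of the coend paired against a Hom functor''; that formula would be the correct one if $R(W)$ were an end (a limit), whereas morphisms into a coend only admit a canonical comparison map from the \emph{coend} of the hom-integrand, which need not be invertible. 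You then repair this implicitly by treating $\cC$ as semisimple --- ``$Z$ ranging over a set of representatives'', the coend presented as a finite biproduct or cokernel indexed by ``the finitely many simple objects'' --- but semisimplicity is not among the hypotheses: in this section ``finite'' means only that hom-spaces are finite dimensional, and semisimplicity enters only in the remark following the proposition, as a sufficient condition for $\tilde R$ to exist. The paper's proof never invokes an end-of-homs formula: it uses the coend's universal property only in the legitimate direction, to get the counit $TR(X)\to X$ from the evaluations $X\otimes Y^\sve\otimes Y\to X$, and it writes down the unit componentwise from the canonical element $coev\in\cC(Y,Z)\otimes\cC(Z,Y)$ of the purity pairing (it is itself terse about how these assemble, but it does not rest on the faulty identification). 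To make your argument sound in the stated generality, either add semisimplicity explicitly as a hypothesis (which is all that is needed for the modular application) or recast the hom-set bijection as the direct unit/counit construction and verify the triangle identities with the zig-zag identities and non-degeneracy, as you indicate at the end.
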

\begin{proof}
We need to define the adjunction natural transformations: $TR\to I,\ I\to RT$. 
\newline
The transformation $TR\to I$ is 
$$TR(X) \to \int^Y T((X\otimes Y^\sve)\boxtimes Y) = \int^Y X\otimes Y^\sve\otimes Y = X.$$
By the universal property of coend (see \cite{mc}) to define $$IX\boxtimes Y\to RT(X\boxtimes Y) = R(X\otimes Y) = \int^Z (X\otimes Y\otimes Z^\sve)\boxtimes Z$$ it is enough to present a dinatural collection of morphisms $(X\otimes Y\otimes Z^\sve)\boxtimes Z\to X\boxtimes Y$. By the definition of $\boxtimes$ $$\cC\boxtimes\cC((X\otimes Y\otimes Z^\sve)\boxtimes Z,X\boxtimes Y) = \cC(X\otimes Y\otimes Z^\sve,X)\otimes\cC(Z,Y).$$ The later coincides with $\cC(X\otimes Y,X\otimes Z)\otimes\cC(Z,Y)$. Now take the canonical element $coev\in\cC(Y,Z)\otimes\cC(Z,Y)$ and consider its image under the map $\cC(Y,Z)\otimes\cC(Z,Y)\to \cC(X\otimes Y,X\otimes Z)\otimes\cC(Z,Y)$, induced by tensoring with the identity morphism on $X$. Dinaturality of this collection is straightforward as well as the adjunction axioms.
\end{proof}
\begin{rem}
\end{rem}
If $\cC$ is semi-simple the coend $\int^Y Y^\sve\boxtimes Y$ always exists and coincides with $\oplus_{Y\in Irr(\cC)}Y^\sve\boxtimes Y$, where the sum runs over the representatives of the set $Irr(\cC)$ of isomorphism classes of simple objects of $\cC$. In particular, the transformation $\beta:TR\to I$ of the adjunction is epi. 
\medskip

Slightly changing the definition from \cite{tu} we call a semisimple monoidal category category {\em modular} if it is rigid, braided, ribbon
and non-degenerate.

Thus for an algebra $A$ in a modular category $\cC$ we have the following description of the full centre:
$$Z(A) = C_l(\iota_+(A)\otimes\tilde R),$$
which was used as the definition in \cite{ffrs,kr}.

\section{Examples}\label{gth}

Here we treat as examples the categories of vector spaces, graded by a group, and categories of representation of a group. 

Let $G$ be a group. Denote by $\cC(G)$ the category of $G$-graded vector spaces. This category is monoidal with respect to the tensor product of graded vector spaces: for $V=\oplus_{g\in G}V_g, U=\oplus_{g\in G}U_g$
$$V\otimes U = \oplus_{g\in G}(V\otimes U)_g,\quad (V\otimes U)_g = \oplus_{g_1g_2=g}V_{g_1}\otimes U_{g_2}.$$

An algebra in $\cC(G)$ is just a {\em $G$-graded algebra}, i.e. a
$G$-graded vector space $A = \oplus_{g\in G}A_g$ with multiplication, which preserves grading
$A_fA_g\subset A_{fg}$. 

We call a $G$-action on a vector space $V$ {\em compatible} with a $G$-grading $V = \oplus_{g\in G}V_g$ if $f(V_g) = V_{fgf^{-1}}$.
The following result is well-know (see for example \cite{da} for the proof).
\begin{prop}\label{mcgr}
The monoidal centre $\cZ(\cC(G))$ is isomorphic, as braided monoidal category, to the category $\cZ(G)$, whose objects
are $G$-graded vector spaces $X = \oplus_{g\in G}X_g$ together with a compatible $G$-action and with morphisms, which are graded
and action preserving homomorphisms of vector spaces. The tensor product in $\cZ(G)$ is the tensor product of
$G$-graded vector spaces with the $G$-action defined by
\begin{equation}\label{tp}
f(x\otimes y) = f(x)\otimes f(y),\quad x\in X, y\in Y.
\end{equation}
The monoidal unit is $I=I_e=k$ with trivial $G$-action.
\newline
The braiding is given by
\begin{equation}\label{br}
c_{X,Y}(x\otimes y) = f(y)\otimes x,\quad x\in X_f, y\in Y.
\end{equation}
\newline
For $Z\in\cZ(G)$ and $U\in\cC(G)$ the half-braiding $z_U:Z\otimes U\to U\otimes Z$ is given by
\begin{equation}\label{hb}
z_U(z\otimes u) = u\otimes g^{-1}(z),\quad u\in U_g. 
\end{equation}

\end{prop}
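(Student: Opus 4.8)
The plan is to produce mutually inverse braided monoidal functors between $\cZ(\cC(G))$ and $\cZ(G)$, each of which is the identity on underlying $G$-graded vector spaces and on morphisms; everything then reduces to translating half-braiding data into a compatible $G$-action and back.

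First I would construct the functor $\cZ(\cC(G))\to\cZ(G)$. Let $(Z,z)$ be an object of $\cZ(\cC(G))$ and, for $g\in G$, let $k_g$ denote the one-dimensional graded vector space supported in degree $g$, with basis vector $e_g$; note $k_e=I$ and $k_f\otimes k_g=k_{fg}$ canonically via $e_f\otimes e_g\mapsto e_{fg}$. Every homogeneous vector $u\in U_g$ of a graded vector space $U$ is the image of $e_g$ under a unique morphism $k_g\to U$ in $\cC(G)$, so naturality of $z$ reduces the whole collection to the isomorphisms $z_{k_g}\colon Z\otimes k_g\to k_g\otimes Z$. Since $z_{k_g}$ preserves degree and $Z_h\otimes e_g$, $e_g\otimes Z_{h'}$ sit in degrees $hg$, $gh'$ respectively, $z_{k_g}$ must carry $Z_h\otimes e_g$ into $e_g\otimes Z_{g^{-1}hg}$; I \emph{define} $g^{-1}(\zeta)\in Z_{g^{-1}hg}$ by $z_{k_g}(\zeta\otimes e_g)=e_g\otimes g^{-1}(\zeta)$ for $\zeta\in Z_h$. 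Invertibility of $z_{k_g}$ gives $g^{-1}(Z_h)=Z_{g^{-1}hg}$, i.e. the compatibility $f(Z_h)=Z_{fhf^{-1}}$; the normalisation $z_I=1$ gives $e(\zeta)=\zeta$; and evaluating the hexagon axiom for $z$ on the objects $k_f,k_g$ (using $k_f\otimes k_g=k_{fg}$) yields exactly $(fg)^{-1}(\zeta)=g^{-1}\big(f^{-1}(\zeta)\big)$, which says precisely that $g\mapsto(\zeta\mapsto g(\zeta))$ is a left $G$-action. Thus $(Z,z)\mapsto(Z,\text{this action})$ is well defined on objects and is the identity on morphisms, hence a functor.

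For the converse functor $\cZ(G)\to\cZ(\cC(G))$ I would send $(Z,\cdot)\in\cZ(G)$ to $(Z,z)$ with $z_U$ defined by formula (\ref{hb}); one checks that (\ref{hb}) indeed lands in $U\otimes Z$ with the right degrees (this uses compatibility of the action with the grading), that it is an isomorphism with inverse $u\otimes\zeta\mapsto g(\zeta)\otimes u$ for $u\in U_g$, that it is natural in $U$, that $z_I=1$, and that it satisfies the hexagon — the last being the computation of the previous paragraph read backwards, now using that the given $G$-action is a homomorphism. Both functors act as the identity on underlying objects and morphisms, and their two reductions of the structure data both factor through the one-dimensional objects $k_g$, so they are mutually inverse; in particular we obtain an isomorphism of categories.

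It remains to match the monoidal and braided structure. Unwinding the definition of the half-braiding $z|w$ of $(Z,z)\otimes(W,w)$ on the objects $k_g$ shows that the $G$-action it induces on $Z\otimes W$ is the diagonal one, $g(\zeta\otimes\omega)=g(\zeta)\otimes g(\omega)$, which is (\ref{tp}); the monoidal unit $(I,1)$ corresponds to $k=I_e$ with trivial action. Finally, the braiding of $\cZ(\cC(G))$ is $c_{(X,x),(Y,y)}=x_Y$, and substituting formula (\ref{hb}) together with the compatibility of the $Y$-structure with its grading identifies it with the braiding (\ref{br}) of $\cZ(G)$, so the isomorphism is braided. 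None of this is deep; the only place that really needs care is the bookkeeping of the conjugated gradings $g^{-1}hg$ while verifying that the hexagon axiom is \emph{exactly} the associativity of the $G$-action (and, symmetrically, the analogous bookkeeping in the tensor-product and braiding formulas). That verification is the main — and essentially the only — obstacle.
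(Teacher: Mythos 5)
Your reduction of a half-braiding to its values on the invertible objects $k_g$, the identification of the hexagon axiom with associativity of the induced $G$-action, the check that $(z|w)$ induces the diagonal action (\ref{tp}), and the derivation of formula (\ref{hb}) are all correct, and this is the standard argument (the paper itself gives no proof here, only the citation to \cite{da}). The gap is in your last step, the matching of braidings. Under your identification the braiding of the centre is $c_{(X,x),(Y,y)}=x_Y$, and substituting your own formula (\ref{hb}) gives
$$x_Y(x\otimes y)=y\otimes g^{-1}(x),\qquad y\in Y_g,$$
a map which acts on the $X$-factor using the degree of $y$. Formula (\ref{br}) instead acts on the $Y$-factor using the degree of $x$: $x\otimes y\mapsto f(y)\otimes x$ for $x\in X_f$. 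These are \emph{not} equal, and compatibility of the $Y$-action with its grading cannot make them equal, because $x_Y$ never sees the action on $Y$ at all (it is natural in $Y$ merely as an object of $\cC(G)$). Concretely, take $G=\Z/2=\{e,s\}$, $\mathrm{char}\,k\neq 2$, and let $X$ and $Y$ both be one-dimensional of degree $s$, with $s$ acting by $-1$ on $X$ and trivially on $Y$; then $x_Y(x\otimes y)=-\,y\otimes x$ while (\ref{br}) gives $+\,y\otimes x$. What your construction actually produces is the braiding $x\otimes y\mapsto y\otimes g^{-1}(x)$ $(y\in Y_g)$, which is the reverse braiding $c_{Y,X}^{-1}$ of (\ref{br}), so your sentence asserting that the substitution ``identifies it with the braiding (\ref{br})'' is false as written.

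This is partly a convention tension already present in the statement: with the paper's convention $c_{(X,x),(Y,y)}=x_Y$ for the centre, formulas (\ref{hb}) and (\ref{br}) cannot both hold under an identification that is the identity on underlying graded spaces. To close the gap you must do one of two things: either prove the braided isomorphism onto $\cZ(G)$ equipped with the reverse of (\ref{br}) and then supply (with proof) a braided equivalence $\cZ(G)\simeq\cZ(G)^{\mathrm{rev}}$, or replace your identity-on-objects identification by a twisted one chosen so that the centre braiding goes exactly to (\ref{br}), in which case the half-braiding formula it carries is no longer literally (\ref{hb}). Either way, the final paragraph of your argument needs a genuine additional step rather than the claimed substitution.
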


As an immediate application we have the following (see \cite{da} for details).
\begin{corr}
An algebra in the category $\cZ(G)$ is a $G$-graded associative algebra $C$ together with a $G$-action such that
\begin{equation}\label{ah}
f(ab) = f(a)f(b),\quad a,b\in C.
\end{equation}
An algebra $C$ in the category $\cZ(G)$ is commutative iff
\begin{equation}\label{co}
ab = f(b)a,\quad \forall a\in C_f, b\in C.
\end{equation}
\end{corr}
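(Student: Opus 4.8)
The plan is to unwind the definition of an algebra, and of a commutative algebra, inside the braided monoidal category $\cZ(G)$, using the explicit description of $\cZ(\cC(G))\simeq\cZ(G)$ furnished by Proposition \ref{mcgr}. An algebra in $\cZ(G)$ is a triple $(C,\mu,\iota)$ with $C\in\cZ(G)$ and $\mu:C\otimes C\to C$, $\iota:I\to C$ morphisms in $\cZ(G)$ satisfying the associativity and unit axioms. Since the forgetful functor $\cZ(G)\to\Vect$ (factoring through $\cC(G)$) is faithful and monoidal, those axioms are equivalent to the ordinary associativity and unit axioms for the underlying linear maps, so the underlying vector space of $C$ is an ordinary associative unital algebra; it remains only to translate the grading and the $G$-action data and the condition that $\mu$, $\iota$ be morphisms in $\cZ(G)$.

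First I would analyse $\mu$. A morphism in $\cZ(G)$ is a linear map that is both grading-preserving and $G$-equivariant. Grading-preservation of $\mu$, combined with the formula $(C\otimes C)_g=\oplus_{g_1g_2=g}C_{g_1}\otimes C_{g_2}$ for the grading on the tensor product, says precisely $C_fC_g\subset C_{fg}$, i.e. $C$ is a $G$-graded algebra. Equivariance of $\mu$, using that the $G$-action on $C\otimes C$ is the diagonal one (\ref{tp}), reads $\mu\bigl(f(a)\otimes f(b)\bigr)=f\bigl(\mu(a\otimes b)\bigr)$, which is exactly (\ref{ah}). For the unit $\iota:I\to C$, the object $I$ is $k$ sitting in degree $e$ with trivial action, so $\iota$ being a morphism forces its image to lie in $C_e$ and to be $G$-fixed, and the unit axioms identify this image with $1\in C$. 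Conversely, given a $G$-graded associative algebra with a $G$-action satisfying (\ref{ah}), one recovers an algebra in $\cZ(G)$: here I would record the standard fact that the degree-$e$ component of $1$ is itself a two-sided unit, hence equals $1$, so $1\in C_e$, and that $f(1)$ is a two-sided unit by (\ref{ah}) and surjectivity of $f$, hence $f(1)=1$. This gives the first assertion.

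For the second assertion I would invoke the definition of commutativity in a braided monoidal category, $\mu\circ c_{C,C}=\mu$. Substituting the explicit braiding (\ref{br}), namely $c_{C,C}(a\otimes b)=f(b)\otimes a$ for $a\in C_f$, $b\in C$, the left-hand side evaluated on $a\otimes b$ is $f(b)\,a$ and the right-hand side is $a\,b$; equating them for all homogeneous $a$ and all $b$ yields exactly (\ref{co}).

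I do not expect a genuine obstacle: the argument is a translation along the equivalence of Proposition \ref{mcgr}. The only point needing a little care is the converse direction of the first assertion — checking that the existence of a unit in $C_e$ and its $G$-invariance come for free, so that the correspondence is a bijection and not merely a one-sided implication — and this is the step I would write out in full.
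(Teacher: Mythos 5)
Your proposal is correct and matches the paper's treatment: the paper presents this corollary as an immediate application of Proposition \ref{mcgr} (referring to \cite{da} for details), i.e.\ exactly the unwinding of $\mu$ and $\iota$ being graded, $G$-equivariant maps and of $\mu\circ c_{C,C}=\mu$ with the explicit braiding (\ref{br}) that you carry out. Your extra care about the unit lying in $C_e$ and being $G$-fixed is a harmless (and correct) filling-in of routine detail.
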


For a homogeneous element $v$ of a $G$-graded vector space $V$ the notation $|v|$ will denote its degree in $G$, i.e. $v\in V_{|v|}$. 
\begin{prop}\label{}
Let $A$ be an algebra in $\cC(G)$ (a graded $G$-algebra). The full centre of $A$ as an object of $\cZ(G)$ is the subspace of the space of functions $G\to A$ with homogeneous values:  
Let $A$ be a $G$-graded algebra. The full centre of $A$ as an object of $\cZ(G)$ is the subspace of the space of functions $G\to A$ with homogeneous values:  
$$Z(A) = \{ z:G\to A|\ \ az(g) = z(hg)a, \forall a\in A_h\}.$$
The $G$-grading on $Z(A)$ is given by $$Z(A)_f = \{ z\in Z(A)|\ |z(g)|=g|z(e)|g^{-1} = gfg^{-1}\}.$$ The $G$-action is $g(z)(f) = z(g^{-1}f)$. 
\\newline
The map $Z(A)\to A$ is the evaluation $z\mapsto z(e)$. 
\end{prop}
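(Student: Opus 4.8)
The plan is to apply the universal property of the full centre directly, exploiting the explicit description of $\cZ(G) = \cZ(\cC(G))$ from Proposition \ref{mcgr} and of algebras and commutative algebras in $\cZ(G)$ from the corollary that follows it. First I would verify that the proposed object $Z(A) = \{z:G\to A\mid az(g) = z(hg)a\ \forall a\in A_h\}$, with the stated $G$-grading and $G$-action, is a well-defined object of $\cZ(G)$: one checks that the grading decomposition $Z(A) = \bigoplus_f Z(A)_f$ is exhausted by the condition $|z(g)| = g|z(e)|g^{-1}$ (using that the defining relation $az(g) = z(hg)a$ with $a$ homogeneous forces $|z(hg)| = h|z(g)|h^{-1}$, so the degree of $z$ at any point is determined by its degree at $e$), and that the action $g(z)(f) = z(g^{-1}f)$ is compatible with the grading in the sense of Proposition \ref{mcgr}, i.e. $g$ sends $Z(A)_f$ to $Z(A)_{gfg^{-1}}$.

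Next I would check that the evaluation map $\zeta:Z(A)\to A$, $z\mapsto z(e)$, lands in $\cC(G)$ (it is graded because $z\in Z(A)_f$ has $|z(e)| = f$) and satisfies the compatibility condition (\ref{cp}). Unwinding (\ref{cp}) using the explicit half-braiding formula (\ref{hb}), $z_U(z\otimes u) = u\otimes g^{-1}(z)$ for $u\in U_g$, the condition becomes: for all homogeneous $a\in A_h$ and $z\in Z(A)$, $z(e)\cdot a = a\cdot \zeta(h^{-1}(z)) = a\cdot z(h)$, which — since $z(e)a$ has the defining relation rewritten as $z(e)a = a z(h)$ when we take the relation $a' z(g) = z(|a'|g)a'$ at $g = e$... more precisely the relation $az(g)=z(hg)a$ at $g=h^{-1}$ gives $az(h^{-1}) = z(e)a$, so I should be careful about which variant of the relation matches; in any case the defining relation of $Z(A)$ is exactly the unwound form of (\ref{cp}). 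This is the computational heart and should be checked carefully but presents no conceptual difficulty.

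Then comes the terminality argument, which I expect to be the main obstacle — not because it is hard but because it requires correctly identifying the universal map. Given any $(Z,\zeta')$ with $Z\in\cZ(G)$ and $\zeta':Z\to A$ satisfying (\ref{cp}), I would define the comparison map $\phi:Z\to Z(A)$ by $\phi(x)(g) = \zeta'(g^{-1}(x))$ for $x\in Z$. One must verify: (i) $\phi(x)$ actually lies in $Z(A)$, i.e. satisfies $a\phi(x)(g) = \phi(x)(hg)a$ for $a\in A_h$ — this is where condition (\ref{cp}) for $\zeta'$ is used, rewritten via (\ref{hb}); (ii) $\phi$ is graded and $G$-equivariant, hence a morphism in $\cZ(G)$; (iii) $\zeta\circ\phi = \zeta'$, which is immediate since $\phi(x)(e) = \zeta'(x)$; and (iv) uniqueness: any $\psi:Z\to Z(A)$ with $\zeta\circ\psi = \zeta'$ must, by $G$-equivariance, satisfy $\psi(x)(g) = (g^{-1}\psi(x))(e) = \psi(g^{-1}x)(e) = \zeta'(g^{-1}x) = \phi(x)(g)$, so $\psi = \phi$. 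Step (iv) is clean; step (i) is the only place real work happens.

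Finally, I would note that Proposition \ref{algz} already guarantees $Z(A)$ carries a commutative algebra structure in $\cZ(G)$ making $\zeta$ an algebra homomorphism, so one need not re-derive the multiplication abstractly; but for concreteness it is worth recording that the induced product is pointwise multiplication of functions, $( z\cdot w)(g) = z(g)w(g)$, and checking this is the unique lift compatible with $\mu_A\circ(\zeta\otimes\zeta)$ — a short calculation using the defining relation to see that the pointwise product again satisfies the $Z(A)$-condition and has the correct grading (degrees multiply: $Z(A)_f \cdot Z(A)_{f'} \subseteq Z(A)_{ff'}$, matching commutativity condition (\ref{co}) since for $z\in Z(A)_f$ one has $zw = f(w)z$ by the defining relation evaluated appropriately). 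Everything else is routine bookkeeping with the formulas from Proposition \ref{mcgr}.
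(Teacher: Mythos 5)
Your proposal follows essentially the same route as the paper: check condition (\ref{cp}) for the evaluation map $z\mapsto z(e)$ by unwinding the half-braiding (\ref{hb}), and prove terminality via the comparison map $\phi(x)(g)=\zeta'(g^{-1}(x))$, which is exactly the paper's $\overline z(g)=\zeta(g^{-1}(z))$. You are in fact somewhat more complete than the paper (which omits the uniqueness argument and the check that $\overline z$ lands in $Z(A)$), and the left/right--inverse mismatch you flag is a convention wobble already present in the paper's formulas rather than a gap in your argument.
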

\begin{proof}
the condition (\ref{cp}) for the map $Z(A)\to A$ is equivalent to $z(e)a = ag^{-1}(z)(e)$ for any $z\in Z(A)$ and $a\in A_g$:
$$\xymatrix{
z\otimes a \ar@{|->}[r] \ar@{|->}[d] & z(e)\otimes a \ar@{|->}[r] & z(e)a  \\ a\otimes g^{-1}(z) \ar@{|->}[r] & a\otimes g^{-1}(z)(e) \ar@{|->}[r] & ag^{-1}(z)(e)
}$$ Thus condition (\ref{cp}) follows from the definition of $Z(A)$:
$ag^{-1}(z)(e) = az(g) = z(e)a$. 
\newline
Let $Z$ be an object of $\cZ(G)$ and $\zeta:Z\to A$ be a homorphism of $G$-graded vector spaces. Condition (\ref{cp}) implies that $\zeta(z)a = a\zeta(g^{-1}(z))$ for any $z\in Z_g$ and $a\in A$:
$$\xymatrix{
z\otimes a \ar@{|->}[r] \ar@{|->}[d] & \zeta(z)\otimes a \ar@{|->}[r] & \zeta(z)a  \\ a\otimes g^{-1}(z) \ar@{|->}[r] & a\otimes \zeta(g^{-1}(z)) \ar@{|->}[r] & a\zeta(g^{-1}(z))
}$$ 
Now we can define a morphism $Z\to Z(A)$ in $\cZ(G)$ by $z\mapsto\overline z$, where $\overline z:G\to A$ is given by $\overline z(g) = \zeta(g^{-1}(z))$. 
\end{proof}
By proposition \ref{algz}, $Z(A)$ is a (commutative) algebra in $\cZ(\cC)$. Indeed, the multiplication in $Z(A)$ is the componentwise product of functions $(zw)(g) = z(g)w(g)$. 

\medskip
We will be interested in a special class of algebras. An algebra $(A,\mu,\iota)$ in a rigid braided monoidal category $\cC$ is called {\em separable} if the following composition (denoted $e:A\otimes A\to 1$) is a non-degenerate pairing:
$$\xymatrix{A\otimes A \ar[r]^(.6)\mu & A \ar[r]^{\epsilon} & 1.}$$ Here $\epsilon$ is the composition
$$\xymatrix{A \ar[r]^(.3){I coev_A} & A\otimes A\otimes A^* \ar[r]^{\mu I} & A\otimes A^* \ar[r]^{c_{A,A^*}} & A^*\otimes A \ar[r]^{ev_A} & 1,}$$ where $coev_A$ and $ev_A$ are duality morphisms for $A$. 
Non-degeneracy of $e$ means that there is a morphism
$coev:1\to A\otimes A$ such that the composition
$$\xymatrix{A \ar[r]^{I coev} & A^{\otimes 3} \ar[r]^{eI} & A}$$ is the identity. It also implies that the similar composition
$$\xymatrix{A \ar[r]^{coev I} & A^{\otimes 3} \ar[r]^{Ie} & A}$$ is also the identity.

It is known that isomorphism (Morita) classes of indecomposable separable algebras in $\cC(G)$ correspond to pairs $(H,\gamma)$, where $H\subset G$ is a subgroup and $\gamma\in H^2(H,k^*)$ is a second cohomology class  with values in the group of invertible elements of the ground field. The class of $(H,\gamma)$ is represented by the {\em skew group algebra} $k[H,\gamma]$. As a vector space $k[H,\gamma]$ is spanned by $H$, with multiplication defined on the basis $e_g,\ g\in G$ by:
$$e_fe_g = \gamma(f,g)e_{fg}.$$ Here we use the symbol $\gamma$ for the cohomology class as well as its representing 2-cocycle. 

Indecomposable commutative separable algebras in $\cZ(G)$ were classified in \cite{da}. Here we briefly describe the result. Note that the identity component $A_e$ of an indecomposable commutative separable algebra $A\in\cZ(G)$ is a commutative separable algebra equipped with a $G$-action. It is well-known that such algebras are algebras of functions on transitive finite $G$-sets and are labelled by conjugacy classes of finite index subgroups of $G$ (stabilisers of $G$-sets). For a minimal idempotent $p$ in $A_e$ the algebra $pA$ is an indecomposable commutative separable algebra in $\cZ(St_G(p))$ with trivial component $pA_e=k$. As such it is a skew group algebra $k[F,\gamma]$, where $F\lhd H=St_G(p)$ is a normal subgroup. The $H$-action on $k[F,\gamma]$ is given by a function $\varepsilon:H\times F\to k^*$;
$$h(e_f) = \varepsilon_h(f)e_{hfh^{-1}}.$$
Thus we have the following (see \cite{da} for details).
\begin{theo}
Indecomposable commutative separable algebras in $\cZ(G)$ are of the form $A(H,F,\gamma,\varepsilon)$, where as a vector space, it is spanned by $a_{g,f}$, with $g\in G,f\in F$, modulo the relations
$$a_{gh,f} = \varepsilon_h(f)a_{g,hfh^{-1}},\quad \forall h\in H,$$ with the $G$-grading, given by $|a_{g,f}| =
gfg^{-1}$, the $G$-action $g'(a_{g,f}) = a_{g'g,f}$ and the multiplication
$$a_{g,f}a_{g',f'} = \delta_{g,g'}\gamma(f,f')a_{g,ff'}.$$
\end{theo}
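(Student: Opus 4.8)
The plan is to follow the reduction sketched just above the statement, carried out in three stages plus a converse.

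\emph{Reduction to the unit component.} For an indecomposable commutative separable algebra $A\in\cZ(G)$ I would first analyse $A_e$. Since $A$ is separable it is in particular dualisable, hence finite-dimensional; restricting condition (\ref{ah}) to degree $e$ shows $A_e$ is a commutative algebra acted on by $G$ through algebra automorphisms, and the separability pairing of $A$, which is graded (morphisms in $\cC(G)$ being graded) and hence pairs $A_f$ with $A_{f^{-1}}$, restricts to a non-degenerate pairing on $A_e$. As $A$ is indecomposable, $A_e$ has no nontrivial $G$-stable idempotents, so it is a $G$-indecomposable commutative separable algebra, and hence $A_e\cong\mathrm{Fun}(G/H)$ for a finite-index subgroup $H\subset G$, determined up to conjugacy. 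Fixing a minimal idempotent $p\in A_e$ with $H=St_G(p)$, the subalgebra $pA$ is a commutative separable algebra in $\cZ(H)$ with one-dimensional unit component $pA_e=k$.

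\emph{Classification of $B\in\cZ(H)$ commutative separable with $B_e=k$.} Writing $B=\bigoplus_f B_f$, non-degeneracy of the separability pairing $e$ forces every nonzero homogeneous element of $B$ to be invertible: if $0\neq b\in B_f$, a homogeneous $b'$ with $e(b\otimes b')\neq 0$ necessarily has degree $f^{-1}$, so $bb'$ is a nonzero element of $B_e=k$. Hence the support $F=\{f\mid B_f\neq 0\}$ is a subgroup of $H$ and each $B_f$ is one-dimensional; choosing $e_f\in B_f$, associativity gives $e_fe_{f'}=\gamma(f,f')e_{ff'}$ with $\gamma\in Z^2(F,k^{*})$, i.e. $B\cong k[F,\gamma]$. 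Since the $H$-action preserves the grading ($h(B_f)=B_{hfh^{-1}}$) and acts by algebra automorphisms, it permutes the support, so $F\lhd H$ and $h(e_f)=\varepsilon_h(f)e_{hfh^{-1}}$ for scalars $\varepsilon_h(f)\in k^{*}$. The identities $h_1(h_2(e_f))=(h_1h_2)(e_f)$ and $h(e_fe_{f'})=h(e_f)h(e_{f'})$, together with the $\cZ(H)$-commutativity condition (\ref{co}) for $B$, then translate into precisely the compatibility conditions between $\gamma$ and $\varepsilon$ in the data $A(H,F,\gamma,\varepsilon)$.

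\emph{Reconstruction and converse.} The minimal idempotents of $A_e\cong\mathrm{Fun}(G/H)$ are permuted transitively by $G$ with the stabiliser of $p$ equal to $H$, so $A=\bigoplus_{gH\in G/H}g(p)A$ and $g(p)A=g(pA)$ as a space carrying the transported $\cZ(H)$-structure; setting $a_{g,f}:=g(e_f)$ produces a spanning set, and $a_{gh,f}=g(h(e_f))=\varepsilon_h(f)\,a_{g,hfh^{-1}}$ is the asserted relation. The grading formula $|a_{g,f}|=g|e_f|g^{-1}=gfg^{-1}$ and the action formula $g'(a_{g,f})=a_{g'g,f}$ follow from Proposition \ref{mcgr}, and since (\ref{co}) makes $A_e$ central in $A$ one gets $a_{g,f}a_{g',f'}=g(p)g'(p)\,a_{g,f}a_{g',f'}=0$ unless $gH=g'H$, while for $g=g'$ one has $g(e_f)g(e_{f'})=g(\gamma(f,f')e_{ff'})=\gamma(f,f')\,a_{g,ff'}$. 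Conversely, a direct check shows that for any quadruple $(H,F,\gamma,\varepsilon)$ satisfying these compatibility conditions the displayed data do define a commutative separable algebra in $\cZ(G)$, indecomposable because its degree-$e$ part $\mathrm{Fun}(G/H)$ is $G$-indecomposable.

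The step I expect to be the genuine obstacle is the classification of commutative separable algebras in $\cZ(H)$ with $B_e=k$: deducing from abstract non-degeneracy of the separability pairing and from $B_e=k$ that the support is a subgroup with one-dimensional graded pieces, and then correctly disentangling how the $H$-action (encoded by $\varepsilon$), the cocycle $\gamma$, and the commutativity condition (\ref{co}) must fit together — this is where all the bookkeeping lives. The reconstruction step is then essentially induction of algebras along $\cZ(H)\hookrightarrow\cZ(G)$ and is largely formal, though the vanishing of the cross terms $a_{g,f}a_{g',f'}$ for $gH\neq g'H$ genuinely relies on (\ref{co}) through the centrality of $A_e$ and deserves a careful check.
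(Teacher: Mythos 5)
Your argument follows exactly the reduction the paper itself relies on (and defers to \cite{da} for): first identify $A_e$ with functions on $G/H$, then identify $pA$ with a twisted group algebra $k[F,\gamma]$ in $\cZ(St_G(p))$ carrying the $H$-action $\varepsilon$, and finally reconstruct $A$ by translating $pA$ around $G/H$. The paper offers no proof beyond this sketch plus the citation, and the details you supply (invertibility of nonzero homogeneous elements of $pA$ from the graded non-degenerate pairing, vanishing of the cross terms via centrality of $A_e$ coming from (\ref{co})) are correct, so this is essentially the paper's approach.
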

Note that when $H=F$ the function $\varepsilon$ is completely defined by $\gamma$ (see \cite{da} for details). Thus we use the notation $A(H,H,\gamma)$ for $A(H,H,\gamma,\varepsilon)$. 

Now we calculate full centres of indecomposable separable algebras in $\cC(G)$. 
\begin{prop}\label{fcgr}
$Z(k[H,\gamma]) = A(H,H,\gamma)$.
\end{prop}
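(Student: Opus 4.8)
The plan is to combine the description of the full centre of an algebra in $\cC(G)$ given by the preceding proposition with the presentation of $A(H,H,\gamma)$ recalled above, and to verify the universal property of section~\ref{fc} directly for an explicit morphism $A(H,H,\gamma)\to k[H,\gamma]$. Throughout I would normalise the cocycle $\gamma$ so that $\gamma(e,-)=\gamma(-,e)=1$; then $e_e=1$ is the unit of $k[H,\gamma]$, each $e_h$ is invertible, $\gamma(h,h^{-1})=\gamma(h^{-1},h)$ by the cocycle identity, and the conjugation rule $e_he_fe_h^{-1}=\varepsilon_h(f)e_{hfh^{-1}}$ recovers exactly the function $\varepsilon$ attached to $A(H,H,\gamma)$ by the classification theorem above.

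First I would write down the candidate structure map $\zeta\colon A(H,H,\gamma)\to k[H,\gamma]$ by $\zeta(a_{g,f})=e_ge_fe_g^{-1}$ if $g\in H$ and $\zeta(a_{g,f})=0$ if $g\notin H$, and check the three things required to make $(A(H,H,\gamma),\zeta)$ an object of the category whose terminal object is $Z(k[H,\gamma])$. It is a morphism of $G$-graded vector spaces since $|a_{g,f}|=gfg^{-1}$; it is well defined modulo the relations $a_{gh,f}=\varepsilon_h(f)a_{g,hfh^{-1}}$ because for $g\in H$ one has $e_{gh}e_fe_{gh}^{-1}=e_g(e_he_fe_h^{-1})e_g^{-1}$ once the $\gamma$-scalars are cancelled (conjugation being insensitive to them), and both sides vanish for $g\notin H$; and it satisfies condition~(\ref{cp}), which by the preceding proposition and proposition~\ref{mcgr} amounts to $\zeta(a_{g,f})e_h=e_h\zeta(a_{h^{-1}g,f})$ for $h\in H$, trivial for $g\notin H$ and a consequence of $e_{h^{-1}g}e_fe_{h^{-1}g}^{-1}=e_h^{-1}(e_ge_fe_g^{-1})e_h$ for $g\in H$.

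Next I would establish the universal property. Given $(Z,\zeta')$ with $Z\in\cZ(\cC(G))$ and $\zeta'\colon Z\to k[H,\gamma]$ satisfying~(\ref{cp}), fix a transversal $T\ni e$ of $G/H$ and put $\psi(z)=\sum_{g_0\in T,\,f\in H}\langle e_f,\zeta'(g_0^{-1}\!\cdot z)\rangle\,a_{g_0,f}$, where $\langle e_f,-\rangle$ is the coefficient of $e_f$. Using~(\ref{cp}) for $\zeta'$ in the form $\zeta'(h\cdot w)=e_h\zeta'(w)e_h^{-1}$ (where $\gamma(h,h^{-1})=\gamma(h^{-1},h)$ makes the scalars cancel) and the identity $\varepsilon_h(h^{-1}fh)\varepsilon_{h^{-1}}(f)=1$ (conjugating back and forth), one checks that $\psi$ does not depend on $T$, respects the $G$-grading, and is $G$-equivariant, hence a morphism in $\cZ(\cC(G))$; and that $\zeta\circ\psi=\zeta'$, using $\zeta(a_{e,f})=e_f$ and $\zeta(a_{g_0,f})=0$ for $g_0\in T\setminus\{e\}$. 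The same two conditions force $\psi$: $G$-equivariance reduces an arbitrary such morphism to its values on the $a_{e,f}$, and $\zeta\circ\psi=\zeta'$ then determines those values. This identifies $(A(H,H,\gamma),\zeta)$ with $Z(k[H,\gamma])$; by proposition~\ref{algz} the commutative algebra structure it thereby inherits in $\cZ(\cC(G))$ is the unique one making $\zeta$ a homomorphism, and a direct (coset-by-coset) check that $\zeta$ is multiplicative shows this is the multiplication $a_{g,f}a_{g',f'}=\delta_{g,g'}\gamma(f,f')a_{g,ff'}$ from the classification theorem.

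The step I expect to be the main obstacle is the scalar bookkeeping with $\gamma$ and $\varepsilon$: one must use the precise conjugation form of $\varepsilon$, the normalisation of $\gamma$ and the resulting identities $\gamma(h,h^{-1})=\gamma(h^{-1},h)$ and $\varepsilon_h(h^{-1}fh)\varepsilon_{h^{-1}}(f)=1$, and keep the left/right $H$-coset conventions and the direction of the $G$-action from the preceding proposition straight; once these are fixed, each verification above (well-definedness of $\zeta$ and of $\psi$, the grading and equivariance of $\psi$, condition~(\ref{cp}), multiplicativity of $\zeta$) is a short computation.
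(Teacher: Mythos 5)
Your proposal is correct, but it takes a genuinely different route from the paper's proof. The paper never re-verifies terminality for this example: it works inside the function-space model of the full centre established in the preceding proposition, notes that a homogeneous element $z$ of degree $f$ must have the form $z(g)=\eta(g)e_{gfg^{-1}}$, translates the defining condition into the functional equation (\ref{coef}), and then computes the two invariants that identify the algebra in the structure theory quoted from \cite{da}: the identity component $Z(k[H,\gamma])_e\simeq k(G/H)$ and, for $p$ the $\delta$-function of $H$, the component $pZ(k[H,\gamma])\simeq k[H,\gamma]$ via $e_f\mapsto z_f$ with $z_f(h)=\gamma(h,f)\gamma(hfh^{-1},h)^{-1}e_{hfh^{-1}}$; the answer $A(H,H,\gamma)$ is then read off. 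You instead exhibit the universal pair directly: $\zeta(a_{g,f})=e_ge_fe_g^{-1}$ for $g\in H$ (zero otherwise), check condition (\ref{cp}) in the elementwise form supplied by proposition \ref{mcgr}, and prove terminality by constructing the comparison morphism $\psi$ coefficientwise over a transversal of $G/H$ and showing its components are forced by equivariance and by composition with $\zeta$; you then get the algebra structure from the uniqueness clause of proposition \ref{algz} together with a direct check that $\zeta$ is multiplicative. Your route buys independence from the classification machinery (you only need the presentation of $A(H,H,\gamma)$) and yields the comparison isomorphism explicitly; the paper's route is shorter because the preceding proposition and \cite{da} have already done the heavy lifting, and it never needs to build $\psi$. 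The one point where the two must be reconciled is the unspecified $\varepsilon$ in $A(H,H,\gamma)$: your conjugation rule $e_he_fe_h^{-1}=\varepsilon_h(f)e_{hfh^{-1}}$ gives $\varepsilon_h(f)=\gamma(h,f)\gamma(hfh^{-1},h)^{-1}$, which is exactly the scalar $\eta_f(h)$ appearing in the paper's proof, so your target really is the algebra $A(H,H,\gamma)$ of \cite{da}; if you quoted a different normalisation of $\varepsilon$ the statement would not literally match. A small phrasing quibble only: in the uniqueness step, ``values on the $a_{e,f}$'' should read ``the component of $\psi(z)$ supported on the coset $H$'', which equivariance then transports to the remaining cosets.
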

\begin{proof}
Let $z:G\to A$ be an element of $Z(k[H,\gamma])_f$. Since the values of $z$ are homogeneous and $|z(g)|=gfg^{-1}$, it should have the form $z(g) = \eta(g)e_{gfg^{-1}}$ for some function $\eta:G\to k$ with the support in $\{g\in G|\ gfg^{-1}\in H\}$. 
\newline
The condition $az(g) = z(hg)a, \forall a\in A_h$ is equivalent to the equation 
\begin{equation}\label{coef}
\gamma(h,gfg^{-1})\eta(g) = \eta(hg)\gamma(hgfg^{-1}h^{-1},h).
\end{equation}
Indeed, for $a=e_h$
$$az(g) = e_h\eta(g)e_{gfg^{-1}} = \gamma(h,gfg^{-1})\eta(g)e_{hgfg^{-1}}$$ should coincide with
$$z(hg)a = \eta(hg)e_{hgfg^{-1}h^{-1}}e_h = \eta(hg)\gamma(hgfg^{-1}h^{-1},h)e_{hgfg^{-1}}$$
In particular, for $|z|=f=e$, $z(g)=\eta(g)e_e$ with $\eta(hg)=\eta(g)$. So $Z(k[H,\gamma])_e$ coincides with the algebra $k(G/H)$ of functions on the $G$-set $G/H$. 
\newline
Let $p$ be the $\delta$-function of $H$. Then $pZ(k[H,\gamma])$ coincides with $k[H,\gamma]$. Indeed, the support of $z\in pZ(k[H,\gamma])$ is always in $H$ and solutions of the equation (\ref{coef}) do exist and are determined by $\eta(f)$. For $f\in H$ define $\eta_f:H\to k$ by $$\eta_f(h) = \gamma(h,f)\gamma^{-1}(hfh^{-1},h).$$ Then the map 
$$k[H,\gamma]\to pZ(k[H,\gamma]),\quad e_f\mapsto z_f,$$
where $z_f(h) = \eta_f(h)e_{hfh^{-1}}$, is an isomorphism. 
\end{proof}
For example, the unit algebra $I\in\cC(G)$ corresponds to the pair $(\{e\},1)$: $I=k[\{e\},1]$. Thus $Z(I)=A(\{e\},\{e\},1)$. 

Now we will deal with another series of examples.
Denote by $\Rep(G)$ the category of representations of $G$ over the ground field $k$. Note that $\Rep(G)$ is a symmetric tensor category over $k$. 
It is well-known that the monoidal centre $\cZ(\Rep(G))$ is equivalent (as a braided monoidal category) to $\cZ(G)$ (see for example \cite{os1}).

An algebra $A$ in $\Rep(G)$ (a {\em $G$-algebra}) is just an (associative, unital) algebra with an action of $G$ by algebra automorphisms. 
\begin{prop}\label{fcga}
The full centre $Z(A)\in\cZ(G)$ of an algebra $A\in\Rep(G)$ has the form $Z(A) = \oplus_{g\in G}Z_g(A)$, where
$$Z_g(A) = \{ x\in A|\ xa=g(a)x\ \forall a\in A\}$$
with the $G$ action, induced from $A$. 
\end{prop}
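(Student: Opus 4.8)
The plan is to identify the terminal object of the category $\cZ(A)$ of pairs from remark \ref{subc} by hand, using the equivalence $\cZ(\Rep(G))\simeq\cZ(G)$ recalled above. I will use the explicit description of $\cZ(G)$ from Proposition \ref{mcgr}: an object is a $G$-graded vector space $Z=\oplus_{g\in G}Z_g$ equipped with a $G$-action satisfying $h(Z_g)\subseteq Z_{hgh^{-1}}$. I claim that the half-braiding of such a $Z$ with an object $U\in\Rep(G)$ is $z_U(z\otimes u)=|z|(u)\otimes z$ for homogeneous $z\in Z$ and $u\in U$; this can be read off from the braiding formula $c_{X,Y}(x\otimes y)=|x|(y)\otimes x$ on $\cZ(G)$ together with the identity $c_{(X,x),(Y,y)}=x_Y$ (the underlying object of $Y$ in $\Rep(G)$ being $Y$ with its grading forgotten).

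First I would unwind condition (\ref{cp}). For a $G$-graded object $Z$ and a $G$-equivariant map $\zeta\colon Z\to A$, evaluating the two legs of diagram (\ref{cp}) on $z\otimes a$ with $z$ homogeneous gives $\zeta(z)a$ on the one side and $\mu(A\zeta)(z_A(z\otimes a))=|z|(a)\,\zeta(z)$ on the other. Hence $(Z,\zeta)$ lies in $\cZ(A)$ exactly when $\zeta(z)a=|z|(a)\,\zeta(z)$ for all homogeneous $z$ and all $a\in A$; equivalently, $\zeta$ maps $Z_g$ into $Z_g(A)$ for every $g\in G$.

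Next I would check that $Z(A)=\oplus_{g\in G}Z_g(A)$, graded by placing $Z_g(A)$ in degree $g$ and with $G$ acting via its action on $A$, is a well-defined object of $\cZ(G)$: since $G$ acts on $A$ by algebra automorphisms, $x\in Z_g(A)$ gives $h(x)b=h(x\,h^{-1}(b))=h(g(h^{-1}(b))\,x)=(hgh^{-1})(b)\,h(x)$ for all $b\in A$, so $h(Z_g(A))\subseteq Z_{hgh^{-1}}(A)$. I take $\zeta_A\colon Z(A)\to A$ to be the $G$-equivariant map whose restriction to the degree-$g$ summand is the inclusion $Z_g(A)\hookrightarrow A$; by the previous paragraph it satisfies (\ref{cp}), so $(Z(A),\zeta_A)\in\cZ(A)$. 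For terminality, given any $(Z,\zeta)\in\cZ(A)$, the corestrictions $\zeta|_{Z_g}\colon Z_g\to Z_g(A)$ assemble into a graded linear map $\varphi\colon Z\to Z(A)$, which is $G$-equivariant because $\zeta$ is and the gradings match up, hence a morphism in $\cZ(G)$ with $\zeta_A\varphi=\zeta$; and any morphism $\psi\colon Z\to Z(A)$ in $\cZ(G)$ is graded, so $\psi(Z_g)\subseteq Z_g(A)$, where $\zeta_A$ is the inclusion into $A$, whence $\zeta_A\psi=\zeta$ forces $\psi|_{Z_g}=\zeta|_{Z_g}=\varphi|_{Z_g}$, i.e. $\psi=\varphi$.

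The main (if mild) obstacle I anticipate is that $\zeta_A\colon Z(A)\to A$ is in general not injective — the subspaces $Z_g(A)\subseteq A$ overlap, for instance $Z_e(A)$ is the ordinary centre of $A$ — so uniqueness of the factorization cannot be obtained by cancelling $\zeta_A$ and must instead be argued degree by degree as above; the other delicate point is pinning down the half-braiding of $\cZ(\Rep(G))$ with the correct group element, so that the unwound form of (\ref{cp}) reads $\zeta(z)a=|z|(a)\,\zeta(z)$ and not $\zeta(z)a=|z|^{-1}(a)\,\zeta(z)$.
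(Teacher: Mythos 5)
Your proposal is correct and follows essentially the same route as the paper: unwind condition (\ref{cp}) in the explicit model $\cZ(G)$ to see that it says $\zeta(z)a=|z|(a)\zeta(z)$, check that $\oplus_g Z_g(A)$ with the induced action is an object of $\cZ(G)$ satisfying (\ref{cp}), and obtain terminality from the fact that any admissible $\zeta$ sends $Z_g$ into $Z_g(A)$. Your degree-by-degree uniqueness argument merely spells out what the paper leaves as ``quite straightforward,'' so there is no substantive difference.
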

\begin{proof}
First we show that $Z(A)$ is an object of $\cZ(G)$. Indeed, $f(Z_g(A))=Z_{fgf^{-1}}(A)$:
$$f(x)a = f(xf^{-1}(a)) = f(gf^{-1}(a)x) = fgf^{-1}(a)f(x).$$
The morphism $Z(A)\to A$ is the direct sum of embeddings $Z_g(A)\subset A$. 
\newline
The diagram (\ref{cp}) commutes by the definition of $Z(A)$:
$$\xymatrix{
x\otimes a \ar@{|->}[r] \ar@{|->}[d] & x\otimes a \ar@{|->}[r] & xa \ar@{=}[d] \\ g(a)\otimes x \ar@{|->}[r] & g(a)\otimes x \ar@{|->}[r] & g(a)x 
}$$
Terminality of $Z(A)$ is also quite straightforward. For $\zeta:Z=\oplus_{g\in G}Z_g\to A$ and $z\in Z_g$ the condition (\ref{cp}) implies that $\zeta(z)\in Z_g(A)$:
$$\xymatrix{
z\otimes a \ar@{|->}[r] \ar@{|->}[d] & \zeta(z)\otimes a \ar@{|->}[r] & \zeta(z)a  \\ g(a)\otimes z \ar@{|->}[r] & g(a)\otimes\zeta(z) \ar@{|->}[r] & g(a)\zeta(z)
}$$
\end{proof}
Proposition \ref{algz} implies that $Z(A)$ is a (commutative) algebra in $\cZ(\cC)$, which can be checked directly. Indeed, $Z_f(A)Z_g(A)\subset Z_{fg}(A)$
$$xya = xg(a)y = fg(a)xy,\quad x\in Z_f(A), y\in Z_g(A), a\in A.$$

It is known (see for example \cite{os1}) that Morita classes of indecomposable algebras in $\Rep(G)$ are in 1-to-1 correspondence with Morita classes of indecomposable separable algebras in $\cC(G)$, i.e. they correspond to pairs  $(H,\gamma)$, where $H\subset G$ is a subgroup and $\gamma\in H^2(H,k^*)$.
 A representative for the class, corresponding to a pair $(H,\gamma)$, can be constructed as follows. Let $V$ be an irreducible projective representation of $H$ with the Schur multiplier $\gamma$, i.e. there is given a homomorphism of algebras $\rho:k[H,\gamma]\to End(V)$ such that the centraliser of the image of $\rho$ is trivial. Then $End(V)$ is an $H$-algebra. Define $A(H,\gamma)$ to be the $G$-algebra, induced from the $H$-algebra $End(V)$:
 $$A(H,\gamma) = ind^G_H(End(V)) = \{a:G\to End(V)|\ a(hg)=h(a(g)),\ \forall h\in H\},$$ with the $G$-action given by $f(a)(g) = a(gf)$  (see \cite{da1} for details). 
\begin{prop}\label{}
$Z(A(H,\gamma)) = A(H,H,\gamma)$.
\end{prop}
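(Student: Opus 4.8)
The plan is to compute $Z(A(H,\gamma))$ directly from Proposition~\ref{fcga} and to match it with $A(H,H,\gamma)$, following the pattern of the proof of Proposition~\ref{fcgr}. (Conceptually the statement is the $\Rep(G)$-counterpart of Proposition~\ref{fcgr}, since $A(H,\gamma)$ and $k[H,\gamma]$ represent corresponding Morita classes, but I would give a self-contained argument.) By Proposition~\ref{fcga}, $Z(A(H,\gamma))=\oplus_{g\in G}Z_g(A(H,\gamma))$ with $Z_g(A(H,\gamma))=\{x\mid xa=g(a)x\ \forall a\}$. First I would rewrite this condition for elements viewed as functions $x,a:G\to\mathrm{End}(V)$ satisfying $x(hf)=h(x(f))$, with the pointwise multiplication and the action $f(a)(f')=a(f'f)$ of $\mathrm{ind}^G_H(\mathrm{End}(V))$. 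The equation $xa=g(a)x$ reads $x(f)\,a(f)=a(fg)\,x(f)$ for all $f\in G$ and all $a$; since the values of $a$ on distinct right cosets $Hf$ are unconstrained, I would split into two cases. If $fgf^{-1}\notin H$, the cosets $Hf$ and $H(fg)$ differ, so $a(f)$ and $a(fg)$ are independent and arbitrary, forcing $x(f)=0$. If $fgf^{-1}=h\in H$, then $fg=hf$ and $a(fg)=h(a(f))=\rho(e_h)\,a(f)\,\rho(e_h)^{-1}$ with $a(f)$ arbitrary, so $\rho(e_h)^{-1}x(f)$ is central in $\mathrm{End}(V)$, hence scalar: $x(f)=\lambda_x(f)\,\rho(e_{fgf^{-1}})$. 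Thus $Z_g(A(H,\gamma))$ is the space of functions of this shape supported on $\{f:fgf^{-1}\in H\}$, and the equivariance constraint turns into a $\gamma$-determined multiplicative law for $\lambda_x$ along left $H$-cosets -- precisely the law governing $\varepsilon$ in the classification Theorem with $F=H$.

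To finish I would mimic the proof of Proposition~\ref{fcgr}. The degree-$e$ part $Z_e(A(H,\gamma))$ is the ordinary centre of $A(H,\gamma)$, which the case analysis identifies with the algebra $k(G/H)$ of functions on the transitive $G$-set $G/H$; in particular it has no nontrivial $G$-invariant idempotents, so $Z(A(H,\gamma))$ is indecomposable and, by the classification Theorem, of the form $A(H',F,\gamma',\varepsilon')$ with $H'$ conjugate to $H$, so we may take $H'=H$. Let $p$ be the $\delta$-function of the coset $He$ inside $Z_e(A(H,\gamma))=k(G/H)$. Then $p\,Z(A(H,\gamma))$ is spanned by the $x$ supported on $H$; by the case analysis this is one-dimensional in degree $g$ for $g\in H$ and zero otherwise, whence $F=H$, and evaluating the pointwise product at $e$ gives $x(e)y(e)=\lambda_x(e)\lambda_y(e)\,\rho(e_{g_1})\rho(e_{g_2})=\lambda_x(e)\lambda_y(e)\,\gamma(g_1,g_2)\,\rho(e_{g_1g_2})$, identifying the $H$-algebra $p\,Z(A(H,\gamma))$ in $\cZ(H)$ with $k[H,\gamma]$ and so $\gamma'=\gamma$. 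Together with the $G$-grading ($|x|$ is read off from $|x(e)|$) and the inherited $G$-action this yields $Z(A(H,\gamma))=A(H,H,\gamma)$. Alternatively I could bypass the classification by writing the isomorphism $A(H,H,\gamma)\to Z(A(H,\gamma))$ on generators, $a_{g',f'}\mapsto x_{g',f'}$ with $x_{g',f'}(f)=\eta_{f'}(g'^{-1}f)\,\rho(e_{ff'f^{-1}})$ for a normalisation $\eta$ of the type occurring in the proof of Proposition~\ref{fcgr}, and checking it respects the relations, grading, action and multiplication.

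The main obstacle is cohomological bookkeeping: one has to verify that the transformation law of $\lambda_x$ under left translation by $H$ is exactly the function $\varepsilon$, that the conjugated $2$-cocycle $\gamma(ff_1f^{-1},ff_2f^{-1})$ produced by the pointwise product agrees with the twist $\gamma(f_1,f_2)$ in $A(H,H,\gamma)$ under the index dictionary $g'\leftrightarrow f^{-1}$, and -- for the direct version -- that the single normalisation $\eta$ makes the grading, action and multiplication come out simultaneously. These are the same cocycle identities underlying the proofs of Proposition~\ref{fcgr} and of the classification Theorem, so I would settle them by invoking \cite{da} and writing out only the extra conjugation by $f$.
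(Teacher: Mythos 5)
Your computation is correct in substance, but it takes a more computational route than the paper, whose proof never unwinds $Z_g(A(H,\gamma))$ element by element: the paper notes that $Z_e(A(H,\gamma))$ is the ordinary centre $ind^G_H(Z(End(V)))=k(G/H)$, cuts with the $\delta$-function $p$ of $H$, identifies $pA(H,\gamma)$ with the $H$-algebra $End(V)$ and $pZ(A(H,\gamma))$ with the full centre of $End(V)$ as an $H$-algebra, and then sees structurally that this full centre is $k[H,\gamma]$: $\rho(e_h)\in Z_h(End(V))$, and $Z_e(End(V))=k$ together with $\rho(e_h)^{-1}Z_h(End(V))=Z_e(End(V))$ forces each graded piece to be the line spanned by $\rho(e_h)$. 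Your direct analysis inside $ind^G_H(End(V))$ --- the coset dichotomy, Schur's lemma giving $x(f)=\lambda_x(f)\rho(e_{fgf^{-1}})$, and the recursion for $\lambda_x$, which is literally equation (\ref{coef}) again --- arrives at the same two facts ($Z_e=k(G/H)$ and $pZ\simeq k[H,\gamma]$) without needing the paper's asserted compatibility $pZ(A(H,\gamma))=Z(End(V))$; the price is heavier coset and cocycle bookkeeping, which is indeed the same as in proposition \ref{fcgr} and \cite{da}, so deferring it there is reasonable. The paper's route buys brevity and the conceptual reduction to the Morita-trivial $H$-algebra $End(V)$; yours buys self-containedness and an explicit description of every graded piece of $Z(A(H,\gamma))$.

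Two details to repair. First, the classification theorem you invoke is stated for \emph{separable} indecomposable commutative algebras, and separability of $Z(A(H,\gamma))$ has not been established at this point; but you do not actually need it, since your computation already yields $Z_e=k(G/H)$ with transitive action and $pZ\simeq k[H,\gamma]$ in $\cZ(H)$, and the decomposition $Z=\moplus_{gH}\,g(p)Z$ into orthogonal translates of $p$ reconstructs $Z$ as $A(H,H,\gamma)$ directly (this is also what the paper implicitly does); alternatively use your explicit isomorphism. Second, that explicit formula is wrong as written: $x_{g',f'}(f)=\eta_{f'}(g'^{-1}f)\rho(e_{ff'f^{-1}})$ sits in degree $f'$ rather than $g'f'g'^{-1}$, and its support $g'H$ is not stable under left translation by $H$, so it does not define an element of $ind^G_H(End(V))$. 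The intended element should be supported on the coset $Hg'^{-1}$ with value at $f$ proportional to $\rho(e_{fg'f'g'^{-1}f^{-1}})$, e.g. $x_{g',f'}(f)=\eta_{f'}(fg')\rho(e_{fg'f'g'^{-1}f^{-1}})$ for $fg'\in H$ and zero otherwise; this has the right degree, the equivariance reduces to equation (\ref{coef}) for $\eta_{f'}$, the $G$-action $g''(a_{g',f'})=a_{g''g',f'}$ matches on the nose, and the relation involving $\varepsilon$ becomes exactly the bookkeeping you propose to quote from \cite{da}.
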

\begin{proof}
By the definition of the full centre of a $G$-algebra, $Z_e(A)$ is the ordinary centre, i.e. the centre of $A$ as an algebra in the category of vector spaces. Thus
$$Z_e(A(H,\gamma)) = ind^G_H(Z(End(V)) = ind^G_H(k) = k(G/H).$$
Let $p\in Z_e(A(H,\gamma))$ be the $\delta$-function of $H$. Note that 
$$pA(H,\gamma) = ind^H_H(End(V)) = End(V).$$
Thus $pZ(A(H,\gamma))$, which coincides with the full centre $Z(End(V))$ of the $H$-algebra $End(V)$, is isomorphic to $k[H,\gamma]$. Indeed, 
$\rho(e_h)$ belongs to $Z_h(End(V))$, so $\rho$ defines a homomorphism $k[H,\gamma]\to End(V)$ of algebras in $\cZ(H)$. Since $Z_e(End(V))=k$ and since $\rho(e_h)^{-1}Z_h(End(V)) = Z_e(End(V))$, this is an isomorphism.
\end{proof}
For example, the unit algebra $I\in\Rep(G)$ corresponds to the pair $(G,1)$. Thus $Z(I)=A(G,G,1)$. 

\begin{rem}
\end{rem}
It can be seen from propositions \ref{fcgr},\ref{fcga}, that the only indecomposable commutative algebras in $\cZ(G)$, that appear as full centres, are those of the form $A(H,H,\gamma)$. This is related to the fact that they have trivial categories of so-called local (or dyslectic) modules (see \cite{da} for details). An explanation of this will be given in a subsequent paper.

\end{document}